\documentclass{svjour3}
%
\smartqed  
\usepackage{graphicx}
\usepackage{latexsym,bm,amsmath,amssymb}
\usepackage{rotating}
\usepackage{multirow,booktabs,cases}
\usepackage[misc]{ifsym}
\usepackage[style=1]{mdframed}
\usepackage{algorithm,algorithmic}
\usepackage[colorlinks=true]{hyperref}
\hypersetup{urlcolor=blue, citecolor=blue,linkcolor=blue}
\usepackage{epstopdf}
\usepackage{bm}
\usepackage{fix-cm}

\usepackage[normalem]{ulem}

\DeclareMathOperator*{\argmin}{argmin}

\newcommand{\R}{\mathbb{R}}
\def\x{{\bf x}}
\def\y{{\bf y}}
\def\z{{\bf z}}

\def\u{{\bf u}}
\def\v{{\bf v}}
\def\t{{\bf t}}
\def\0{{\bf 0}}
\def\A{{\cal A}}
\def\B{{\cal B}}

\def\T{{\cal T}}
\def\G{{\cal G}}

\def\s{{\tilde{s}}}

\allowdisplaybreaks

\begin{document}
\graphicspath{{./PIC/}}

\title{Tensor-based Dinkelbach method for computing generalized tensor eigenvalues and its applications}
\titlerunning{TD method for minimal generalized eigenvalue and applications}     

\author{Haibin Chen \and Wenqi Zhu  \and Coralia Cartis}


\institute{
H. Chen \at School of Management Science, Qufu Normal University, Rizhao, Shandong, China, 276800. \\
\email{chenhaibin508@qfnu.edu.cn}
\and W. Zhu \at Mathematical Institute, Woodstock Road, University of Oxford, Oxford, UK, OX2 6GG. \email{wenqi.zhu@maths.ox.ac.uk}
\and C. Cartis \at Mathematical Institute, Woodstock Road, University of Oxford, Oxford, UK, OX2 6GG.
\email{cartis@maths.ox.ac.uk}}

\date{Received: date / Accepted: date}

\maketitle

\begin{abstract}
In this paper, we propose a novel tensor-based Dinkelbach--Type method for computing extremal tensor generalized eigenvalues. We show that the extremal tensor generalized eigenvalue can be reformulated as a critical subproblem of the classical Dinkelbach--Type method, which can subsequently be expressed as a multilinear optimization problem (MOP). The MOP is solved under a spherical constraint using an efficient proximal alternative minimization method which we rigorously establish the global convergence. Additionally, the equivalent MOP is reformulated as an unconstrained optimization problem, allowing for the analysis of the Kurdyka-{\L}ojasiewicz (KL) exponent and providing an explicit expression for the convergence rate of the proposed algorithm. Preliminary numerical experiments on solving extremal tensor generalized eigenvalues and minimizing high-order trust-region subproblems are provided, validating the efficacy and practical utility of the proposed method.

\keywords{Tensor eigenvalues - Generalized eigenpair - Fractional programming - Symmetric tensor - Global convergence.}
\subclass{65H17 - 15A18 - 90C3}
\end{abstract}

\section{Introduction}\label{Sec1}

With the growing research interest in efficient methods for processing large-scale and high-dimensional datasets, researchers have been actively developing algorithms for analyzing high-order tensors, which are multi-dimensional arrays of data. As a fundamental component of tensor analysis, tensor eigenvalues, and eigenvectors, along with their potential applications and computational methods, have been extensively studied in the literature, for example, in \cite{Cardoso99,Hillar2013,Nie2018}.
Several types of tensor eigenvalues have been introduced through different generalizations of the matrix case, each with specific applications. These include Z-eigenvalues \cite{Lim2005,Qi2005}, H-eigenvalues \cite{CPT2009,Qi2005}, D-eigenvalues, and others \cite{Qi08}.

The concept of tensor generalized eigenvalues was defined by Chang, Pearson, and Zhang \cite{CPT2009}. For given $m$th-order, $n$-dimensional real-valued symmetric tensors $\A$ and $\B$, the objective is to find $\lambda \in \R$ and $\x \in \R^n \setminus \{\0\}$ such that
\begin{eqnarray}
    \A \x^{m-1} = \lambda \B \x^{m-1}.
    \label{GEP}
\end{eqnarray}
A key advantage of the generalized eigenpair framework is its flexibility: by appropriately choosing $\B$, it can encompass multiple definitions of tensor eigenvalues, such as Z-eigenvalues, H-eigenvalues, and D-eigenvalues. For further details on selecting $\B$, we have constructed a summary table in Appendix \ref{Appendix: Def of Tensor eig}. We also refer readers to \cite{Kolda2014} for additional insights.
Computing eigenvalues of high-order tensors is an NP-hard problem, even for tensors with special structures \cite{Cui2014,Ng2009}. Consequently, researchers have primarily focused on computing their extremal eigenvalues. To the best of our knowledge, various numerical algorithms have been developed to compute extremal Z-eigenvalues, H-eigenvalues, or D-eigenvalues. These include the high-order power method \cite{Ng2009}, the SOS polynomial algorithm \cite{Cui2014}, and the Newton method, among others \cite{CQ2015}. However, when \(m\) is even and \(\B\) is positive definite, the well-known numerical method for computing generalized tensor eigenvalues is the generalized eigenproblem adaptive power (GEAP) method, proposed by Kolda et al. in \cite{Kolda2014}. After that, an adaptive gradient (AG) approach was presented by Yu et al. in 2016 to establish both global convergence and linear
convergence rates under certain appropriate conditions \cite{Yu2016}. Moreover, for the generalized eigenvalue of tensors with certain special structures, Zhao et al. proposed two convergent gradient projection techniques to address weakly symmetric tensors \cite{ZYL2017}. Very recently, 
a normalized Newton method named NNGEM was presented to compute the generalized tensor eigenvalue problem in \cite{PAH2024}. It was proved that 
the NNGEM approach has a higher rate of convergence than the AP and GEAM approaches. Note that, the algorithms above mainly focus on solving as many eigenpairs as possible, rather than the extreme eigenpairs.

In this paper, we reformulate the extremal generalized tensor eigenvalue problem in \eqref{GEP} as a subproblem of the classical Dinkelbach--Type method.
We outline this reformulation through the following analysis. By taking the dot product with \(\x\) on both sides of \eqref{GEP}, any solution to \eqref{GEP} satisfies
\[
\lambda = \frac{\A\x^m}{\B\x^m}.
\]
The extremal generalized eigenpairs can thus be computed by solving the following nonlinear program \cite{Kolda2014}
\[
\min_{\x \in S} \frac{\A\x^m}{\B\x^m}, \quad \text{and} \quad \max_{\x \in S} \frac{\A\x^m}{\B\x^m},
\]
where \(S = \{\x : \|\x\|^m = 1\}\). These relates to a class of single-ratio fractional programming problems in the form
\begin{equation}\label{e1}
\bar{\theta} = \min_{\x \in S} \frac{f(\x)}{g(\x)},
\end{equation}
and can be addressed using the classical Dinkelbach--Type method \cite{Dinkelbach1967}.

Using this tensor representation of homogeneous polynomials in \eqref{e1}, we propose a tensor-based Dinkelbach--Type method for computing the minimal generalized eigenvalue of two symmetric tensors. The proposed algorithm involves finding the root of the equation \(F(\bar{\theta}) = 0\), where \(F(\bar{\theta})\) is the optimal value of the following parametric program
\begin{equation}\label{e2}
F(\bar{\theta}) = \min_{\x \in S} \left(f(\x) - \bar{\theta} g(\x)\right).
\end{equation}
which shares the same global optimal solutions as \eqref{e1}.
Note that are homogeneous polynomials \(f(\x)\) and \(g(\x)\)  of even degree \(d\).  It is well known that homogeneous polynomials can be reformulated as high-order symmetric tensors applied to a vector of variables. Therefore, we show that \eqref{e2} can be solved by a proximal alternative minimization algorithm through a multilinear optimization framework. The PAM algorithm alternates between updating each variable (or block of variables) while keeping the others fixed, incorporating a proximal step to ensure both convergence and stability. The subproblems in PAM are of lower dimensions, and each subproblem has a closed-form solution.


The main contributions of this paper are twofold:

\begin{enumerate}
    \item First, we establish that (\ref{e2}) is equivalent to a multilinear optimization problem (MOP) under the condition that \( F(\bar{\theta}) \) is concave. For cases where the objective function is not concave, we introduce an augmented model with a regularization term to ensure the concavity of the resulting model. Furthermore, we prove that the augmented model and (\ref{e2}) share the same optimal solutions under unit spherical constraints.

    \item Second, leveraging the multilinear model corresponding to the augmented concave model, we propose a proximal alternating minimization (PAM) algorithm. Notably, the PAM algorithm can be interpreted as a block coordinate descent (BCD) method (see \cite{Tseng2001}) with a cyclic update rule and proximal terms tailored for polynomial optimization models. When applied to (\ref{e2}) under unit spherical constraints, the subproblems of the PAM algorithm have analytic optimal solutions.
    We rigorously establish the global convergence of the PAM algorithm, using the analysis of the Kurdyka-{\L}ojasiewicz (KL) exponent, under mild assumptions, we provide an explicit expression for the convergence rate of the proposed algorithm. Numerical experiments are conducted to compute the minimal generalized eigenvalue of symmetric tensors. Additionally, with slight modifications, the PAM algorithm can also be applied to the minimization of the high-order trust-region subproblems on the boundary.
\end{enumerate}

The remainder of this paper is organized as follows. Section \ref{Sec2} provides a review of relevant symbols, definitions of tensors, and the Dinkelbach--Type method. In Section \ref{Sec3}, we present the proposed PAM algorithm and establish its convergence and convergence rate under mild assumptions. Section \ref{Sec4} presents preliminary numerical results, demonstrating the application of the proposed algorithm for computing the minimal generalized eigenvalue of symmetric tensors and minimizing high-order trust-region subproblems on the boundary.
Finally, Section \ref{Sec5} offers concluding remarks and outlines potential future research directions.

\section{Preliminaries}\label{Sec2}

In this section, we recall some notations and useful preliminaries.
For a positive integer $n$, let $[n]=\{1,2,\cdots,n\}$. Real scalars from $\R$ are denoted by unbolded lowercase letters such as $\lambda$ and $a$, while vectors are denoted by bolded lowercase letters such as $\x$ and $\y$. Let $\R^n$ ($\R^n_+$) represent the set of all $n$-dimensional vectors (nonnegative vectors), and $\R^{m\times n}$ represent the set of all $m\times n$ matrices. Matrices are denoted by uppercase letters such as $A$ and $B$, while tensors are denoted by uppercase bold letters such as $\A$ and $\B$. $
\mathbb{R}_{p}[\s],  \text{where } p \geq 1 \text{ and } \s = [s_1, \dotsc, s_n]^\top,
$
represents the space of polynomials in \(n\) variables with real coefficients and a maximum degree of \(p\).

An order $m$ dimension $n$ tensor $\A$ is defined as $\A=(a_{i_1i_2\cdots i_m})$, where $i_1,\cdots,i_m \in [n]$ and $a_{i_1i_2\cdots i_m}$ is the $(i_1,\cdots,i_m)$th entry of $\A$. A tensor $\A$ is called symmetric if
\[
a_{i_1i_2\cdots i_m}=a_{i_{\sigma(1)}i_{\sigma(2)}\cdots i_{\sigma(m)}}
\]
for any permutation $\sigma$ of $[m]$. We denote the set of all symmetric tensors of order $m$ and dimension $n$ by $S^{m,n}$.

Let $\A=(a_{i_1i_2\cdots i_m})$ and $\B=(b_{i_1i_2\cdots i_m})\in S^{m,n}$ be two symmetric tensors. The inner product of $\A$ and $\B$ is defined as:
\[
\langle\A,~ \B\rangle=\sum_{i_1,i_2,\cdots,i_m\in[n]}a_{i_1i_2\cdots i_m}b_{i_1i_2\cdots i_m}.
\]
Let $\R_m[\x]$ denote the set of real polynomials of degree at most $m$. For $f(\x)\in \R_m[\x]$ and $\x\in\R^n$, its corresponding tensor is denoted by $\A_f=(a_{i_1i_2\cdots i_m})\in S^{m, n}$:
\[
f(\x)=\A_f\x^m=\langle\A_f,~ \underbrace{\x\circ\x\circ\cdots\circ\x}_m\rangle=\sum_{i_1,\cdots,i_m\in[n]}a_{i_1i_2\cdots i_m}x_{i_1}x_{i_2}\cdots x_{i_m},
\]
where ``$\circ$" denotes the outer product, and $\x^m=\underbrace{\x\circ\x\circ\cdots\circ\x}_m$ denotes a symmetric rank-1 tensor.
A tensor $\A\in S^{m,n}$ is called a positive semidefinite tensor if $m$ is even and $f(\x)=\A\x^m\geq 0$ for all $\x$. If the strict inequality $f(\x)>0$ holds for all $\x\neq\mathbf{0}$, then $\A$ is called a positive tensor.

\section{PAM for Tensor Generalized Eigenvalues Problems}\label{Sec3}

In this section, we present the main results of the paper. After revisiting the classic Dinkelbach--Type method, we introduce a proximal alternative minimization (PAM) method to compute the optimal solution of (\ref{e1}). A key step in this process is demonstrating the equivalence between (\ref{e2}) and a multilinear programming problem.

\subsection{Dinkelbach-Type Method and the Relationship between \eqref{e1} and \eqref{e2}}

It has been established in the literature that \eqref{e1} and \eqref{e2} share the same global optimal solution \cite{Dinkelbach1967}. This result is stated in Proposition \ref{prop1}. Consequently, minimizing \eqref{e1} can be achieved through an iterative minimization algorithm for \eqref{e2}, commonly referred to as the Dinkelbach--Type method, which is presented in Algorithm \ref{alg31}.  However, solving the subproblem in Algorithm \ref{alg31} is generally challenging even for some special fractional programming problems. Recently, Zhang and Li proposed a proximity-gradient subgradient algorithm to address a category of nonconvex fractional optimization problems \cite{ZL2022}, where $g(\x)$ is convex
but may not be smooth and $f(\x)$ is potentially nonconvex and nonsmooth. Furthermore, more general cases for fractional programming are considered in \cite{Bot17,Bot22,Bot23}.
Fortunately, in our specific context of finding the generalized tensor eigenvalue, the eigenvalue formulation ensures that both \(f(\x)\) and \(g(\x)\) are homogeneous. Leveraging this property, we prove a stronger result: \eqref{e1} and \eqref{e2} not only share the same global solution but also have the same KKT points. Furthermore, we provide a homogeneous tensor formulation for both \eqref{e1} and \eqref{e2}.
These formulations play a critical role in transforming the problem into a multilinear optimization framework, enabling us to efficiently solve the subproblem in Algorithm \ref{alg31}.

\begin{proposition}\label{prop1}{\rm\cite{Dinkelbach1967}} Assume that $S\subseteq\R^n$ is compact, and $f(\x), g(\x)$ are homogeneous polynomials such that $g(\x)\geq 0, \x\in S$. Then for problems (\ref{e1}) and (\ref{e2}), it always holds that
$F(\bar{\theta})=0$. Furthermore, $\x^*$ is an optimal solution of (\ref{e1}) if and only if $\x^*$ is an optimal solution of $F(\bar{\theta})$ in (\ref{e2}), and if $\theta=\hat{\theta}$ in (\ref{e2}) satisfying $F(\hat{\theta})=0$, then $\hat{\theta}=\bar{\theta}$.
\end{proposition}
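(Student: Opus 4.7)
The plan is to reduce the result to two elementary inequalities relating the fractional objective $f(\x)/g(\x)$ and the affine expression $f(\x) - \theta g(\x)$, using compactness of $S$ to guarantee that the relevant minima are attained. Note that the homogeneous polynomial hypothesis makes both $f$ and $g$ continuous, and implicitly one needs $g(\x) > 0$ on $S$ for the ratio in \eqref{e1} to be well-defined; I would state this mild strengthening of ``$g(\x) \geq 0$'' at the outset so that the quantity $\bar{\theta}$ is a finite real number.

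First I would prove that $F(\bar{\theta}) = 0$. By continuity of $f, g$ and compactness of $S$, the infimum in \eqref{e1} is attained at some $\x_0 \in S$, so $\bar{\theta} = f(\x_0)/g(\x_0)$, which rearranges to $f(\x_0) - \bar{\theta} g(\x_0) = 0$ and thus yields $F(\bar{\theta}) \leq 0$. For the reverse inequality, I would observe that for every $\x \in S$, the defining property $f(\x)/g(\x) \geq \bar{\theta}$ combined with $g(\x) \geq 0$ gives $f(\x) - \bar{\theta} g(\x) \geq 0$, so $F(\bar{\theta}) \geq 0$. Combining the two bounds gives $F(\bar{\theta}) = 0$, and the argument also identifies $\x_0$ as a minimizer of \eqref{e2} at $\theta = \bar{\theta}$.

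For the equivalence of optimal solutions, one direction is immediate from the previous paragraph: any minimizer $\x^*$ of \eqref{e1} satisfies $f(\x^*) - \bar{\theta} g(\x^*) = 0 = F(\bar{\theta})$, hence also solves \eqref{e2}. For the converse, if $\x^*$ attains $F(\bar{\theta})$ then $f(\x^*) - \bar{\theta} g(\x^*) = 0$, whence $f(\x^*)/g(\x^*) = \bar{\theta}$, so $\x^*$ is an optimal solution of \eqref{e1}. Finally, to show that $\hat{\theta}$ with $F(\hat{\theta}) = 0$ must equal $\bar{\theta}$, I would let $\x^*$ attain this minimum; then $f(\x^*) = \hat{\theta} g(\x^*)$ gives $\hat{\theta} = f(\x^*)/g(\x^*) \geq \bar{\theta}$. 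Conversely, $F(\hat{\theta}) = 0$ yields $f(\x) \geq \hat{\theta} g(\x)$ for all $\x \in S$, i.e.\ $f(\x)/g(\x) \geq \hat{\theta}$, and taking the infimum over $S$ gives $\bar{\theta} \geq \hat{\theta}$, so equality holds.

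There is no real technical obstacle here since the argument is essentially the classical Dinkelbach reduction; the only point requiring some care is the degenerate case in which $g(\x) = 0$ for some $\x \in S$. I would handle this by noting that any minimizer $\x_0$ of $f/g$ over $S$ must satisfy $g(\x_0) > 0$ (otherwise the ratio is undefined or $+\infty$) and by observing that on the zero set of $g$ the inequality $f(\x) - \bar{\theta} g(\x) \geq 0$ is still automatic provided $f \geq 0$ there, which is forced by the finiteness of $\bar{\theta}$. This keeps the proof self-contained and fully within the hypotheses as stated.
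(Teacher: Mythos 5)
The paper does not prove Proposition~\ref{prop1}; it is stated with the citation to Dinkelbach (1967) and used as a black box, so there is no in-paper argument to compare against. Your proof is the standard Dinkelbach reduction and is correct: the two-sided bound on $F(\bar{\theta})$ via attainment on the compact $S$ plus the pointwise inequality $f(\x)/g(\x) \geq \bar{\theta}$, the equivalence of minimizers by reading the equality $f(\x^*) - \bar{\theta}g(\x^*) = 0$ in both directions, and the uniqueness of the root $\hat{\theta}$ by the same two inequalities.

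Two small remarks worth keeping in mind. First, you are right that the hypothesis must effectively be $g(\x)>0$ on $S$ for $\bar{\theta}$ to be a finite real number and for the ratio in \eqref{e1} to be everywhere defined; the paper uses this implicitly (it later even writes $g(\x)>0$ in Theorem~\ref{theorem1}), so flagging the strengthening is appropriate. Your closing attempt to rescue the literal $g\geq 0$ case is, however, not airtight: if $g(\x_1)=0$ and $f(\x_1)<0$ at some $\x_1\in S$, finiteness of $\bar{\theta}$ (computed as an infimum over the set where $g>0$) does not by itself force $f\geq 0$ on $\{g=0\}$, so $F(\bar{\theta})\geq 0$ could fail there. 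The clean fix is simply to assume $g>0$ on $S$, which you already do at the outset. Second, you correctly observe that homogeneity of $f$ and $g$ plays no role in this proposition; only continuity and compactness are used. The homogeneity is exploited later in the paper, in Proposition~\ref{prop3} and the tensor/multilinear reformulation, not here.
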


\begin{algorithm}[!htbp]
\caption{Dinkelbach-type Algorithm for \eqref{e1}.}\label{alg31}
\begin{algorithmic}[1]
\STATE Set a tolerance level, \texttt{TOL} and $ k_{\max}$. Choose $\x^0\in S$ and set $\theta_1=\frac{f(\x^0)}{g(\x^0)}$. Let $k=1$.\medskip

\STATE While  $|F(\theta_k)| \ge \texttt{TOL} $ or $k \le k_{\max}$

\STATE Find
\begin{eqnarray}
    \x^k=\arg\min_{\x\in S}\left(f(\x)-\theta_k g(\x)\right)
    \label{Key subprob of Dink}
\end{eqnarray}(This can be solved efficiently by Algorithm \eqref{alg32}. \medskip
\STATE If $|F(\theta_k)| \le \texttt{TOL} $, stop. Then $\x^{k}$ is an optimal solution of (\ref{e1}) with optimal value $\theta_k$.\medskip

\STATE If $|F(\theta_k)| \ge \texttt{TOL}$, take $\theta_{k+1}=\frac{f(\x^k)}{g(\x^k)}$.\medskip

\STATE Set $k:=k+1$ and go to step 2.
\end{algorithmic}
\end{algorithm}







\begin{remark}
The core idea of the Dinkelbach--Type method (see Algorithm \ref{alg31}) is to construct a nonincreasing sequence \(\{\theta_k\}\), which generates a nondecreasing sequence of function values \(\{F(\theta_k)\}\), continuing until the stopping criterion \(F(\theta_k) = 0\) is satisfied. We remark that Algorithm \ref{alg31} is provably convergent \cite{CFS1985,Dinkelbach1967}, with the convergence result included in Appendix \ref{appendix convergence of algo 1} for completeness. It is also worth highlighting that the primary computational cost of this algorithm lies in the crucial step 2.
\end{remark}

Generally speaking, the subproblem is difficult to solve unless the constraint set $S$ and the objective function possesses special structures. Therefore, to efficiently compute the subproblem $\x^k = \arg\min_{\x \in S}\left(f(\x) - \theta_k g(\x)\right)$, the homogeneous structure of the objective function should be considered.

The homogeneous polynomial $f(\x)$ can always be represented as the inner product between its coefficient tensor $\A_f$ and a symmetric rank-1 tensor $\x \circ \x \circ \cdots \circ \x$. Without loss of generality, let $\A(\theta) = (a^{\theta}_{i_1 i_2 \ldots i_d})$, where $i_1, i_2, \ldots, i_d \in [n]$ is a $d$-th order, $n$-dimensional tensor such that
\[
f(\x) - \theta g(\x) = \A(\theta) \x^d = \sum_{i_1, i_2, \ldots, i_d = 1}^n a^{\theta}_{i_1 i_2 \ldots i_d} x_{i_1} x_{i_2} \ldots x_{i_d}.
\]
In the following analysis, we consider the problem (\ref{e1}) with a unit sphere constraint. The proposition below shows that, from an optimization perspective, the optimal solution of (\ref{e1}) must be a critical point of $F(\bar{\theta})$. Furthermore, a new relationship between KKT points \eqref{e1}-\eqref{e2} and Z-eigenvector is proved (The notion of Z-eigenvector is referred to in Appendix \ref{Appendix: Def of Tensor eig}).

\begin{proposition}\label{prop3} Let $\x^*$ be an optimal solution of (\ref{e1}) with $\bar{\theta}=\frac{f(\x^*)}{g(\x^*)}$, and $S:=\{\x\in\R^n~|~\|\x\|=1\}$, then it holds that $\nabla f(\x^*)-\bar{\theta}\nabla g(\x^*)=\0$, and $\x^*$ is a KKT point of (\ref{e1}) and (\ref{e2}). Furthermore, assume that $\bar{\x}\neq\x^*$ is an arbitrary KKT point of
(\ref{e1}), then $\bar{\x}$ is a KKT point of (\ref{e2}) if and only if $\bar{\x}$ is a Z-eigenvector of the corresponding tensor $\G$, where $g(\x)=\G\x^m$.
\end{proposition}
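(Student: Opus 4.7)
The plan is to exploit the homogeneity of $f$ and $g$ via Euler's identity and to use Proposition~\ref{prop1} to pin down the Lagrange multipliers. Throughout, on $S=\{\x:\|\x\|=1\}$ the constraint gradient $2\x$ is nonzero, so LICQ holds and the standard KKT conditions apply to both (\ref{e1}) and (\ref{e2}).

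For the first claim, I would first invoke Proposition~\ref{prop1} to conclude that $\x^*$ is also a minimizer of (\ref{e2}) and that $F(\bar\theta)=f(\x^*)-\bar\theta g(\x^*)=0$. Writing the KKT conditions of (\ref{e2}) at $\x^*$ gives $\nabla f(\x^*)-\bar\theta\nabla g(\x^*)=2\nu\x^*$ for some $\nu$. Taking the inner product with $\x^*$ and invoking Euler's identity $\x^{T}\nabla f(\x)=mf(\x)$ (and analogously for $g$) reduces this to $2\nu=m\bigl(f(\x^*)-\bar\theta g(\x^*)\bigr)=0$, so $\nu=0$ and the claimed gradient identity follows. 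Substituting this identity into the KKT conditions of (\ref{e1}) immediately verifies that $\x^*$ is a KKT point of (\ref{e1}) with multiplier zero.

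For the second claim, the same Euler-identity trick applied to (\ref{e1}) shows that any KKT point $\bar\x$ of (\ref{e1}) must have multiplier zero, so the KKT condition collapses to $\nabla f(\bar\x)-\tfrac{f(\bar\x)}{g(\bar\x)}\nabla g(\bar\x)=\0$. If $\bar\x$ is simultaneously a KKT point of (\ref{e2}), then $\nabla f(\bar\x)-\bar\theta\nabla g(\bar\x)=2\nu\bar\x$; subtracting the two identities yields $\bigl(\tfrac{f(\bar\x)}{g(\bar\x)}-\bar\theta\bigr)\nabla g(\bar\x)=2\nu\bar\x$. Provided this scalar prefactor is nonzero, $\nabla g(\bar\x)$ must be parallel to $\bar\x$; since $\nabla g(\bar\x)=m\G\bar\x^{m-1}$ and $\|\bar\x\|=1$, this is precisely the Z-eigenvector relation $\G\bar\x^{m-1}=\lambda\bar\x$. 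Conversely, starting from the Z-eigenvector relation gives $\nabla g(\bar\x)\propto\bar\x$, and combining with the already-established KKT condition of (\ref{e1}) at $\bar\x$ puts $\nabla f(\bar\x)-\bar\theta\nabla g(\bar\x)$ into the form $2\nu\bar\x$ with an explicit multiplier, so $\bar\x$ is a KKT point of (\ref{e2}).

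The main obstacle I anticipate is the degenerate case $f(\bar\x)/g(\bar\x)=\bar\theta$, in which the scalar factor used to solve for $\nabla g(\bar\x)$ vanishes. Here $\bar\x$ is itself an optimal solution of (\ref{e1}) distinct from $\x^*$, and one must either appeal to an implicit uniqueness assumption on $\x^*$ or separately observe (as in the first claim) that $\nabla f(\bar\x)-\bar\theta\nabla g(\bar\x)=\0$ already holds, making $\bar\x$ a trivial KKT point of (\ref{e2}) with $\nu=0$ independently of whether $\nabla g(\bar\x)$ is aligned with $\bar\x$. This borderline case is the one place where the formal write-up will need extra care.
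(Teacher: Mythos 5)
Your proof is correct and follows essentially the same route as the paper: reduce to the KKT systems of both problems, pin down the sphere multipliers by pairing the stationarity condition with $\x$ (what the paper calls ``a direct computation'' and you make explicit via Euler's identity $\x^\top\nabla f(\x)=mf(\x)$), and subtract the two resulting stationarity conditions to isolate $\nabla g(\bar\x)$ and hence the Z-eigenvector relation $m\G\bar\x^{m-1}\propto\bar\x$. Your converse direction likewise matches the paper's choice $\bar\mu=a\bigl(\frac{f(\bar\x)}{g(\bar\x)}-\bar\theta\bigr)$.

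The degenerate case you flag in the last paragraph is a genuine observation, and it is equally present in the paper's argument. The paper disposes of it with the line ``Since $\bar\x\neq\x^*$, by Proposition~\ref{prop1} again, we know that $\frac{f(\bar\x)}{g(\bar\x)}-\bar\theta\neq 0$,'' but Proposition~\ref{prop1} by itself does not rule out a second global minimizer of (\ref{e1}) distinct from $\x^*$: it only asserts that optimal solutions of (\ref{e1}) and of $F(\bar\theta)$ coincide, not that they are unique. If such a $\bar\x$ exists, then by the first-claim argument it is automatically a KKT point of (\ref{e2}) with multiplier zero, yet there is nothing forcing $\nabla g(\bar\x)$ to be parallel to $\bar\x$, so the ``only if'' direction can fail. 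In other words, the statement as written implicitly restricts attention to KKT points $\bar\x$ that are not themselves global optima (equivalently, it implicitly assumes uniqueness of the minimizer), and both your write-up and the paper's should state that hypothesis explicitly.
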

\proof.
By Proposition \ref{prop1}, we know that $\x^*$ is an optimal solution of $F(\bar{\theta})$.
Since the independent constraint qualification is satisfied, we know that $\x^*$ is a KKT point of (\ref{e1}) and (\ref{e2}) simultaneously.

Suppose that $\bar{\x}\in S$ is an arbitrary KKT point of (\ref{e1}) and (\ref{e2}), then there are multipliers $\bar{\lambda}$, $\bar{\mu}\in\mathbb{R}$
such that
\begin{equation}\label{e3}
\left\{
\begin{aligned}
&\frac{g(\bar{\x})\nabla f(\bar{\x})-f(\bar{\x})\nabla g(\bar{\x})}{g^2(\bar{\x})}-\bar{\lambda}\bar{\x}=\0,\\
&\nabla f(\bar{\x})-\bar{\theta}\nabla g(\bar{\x})-\bar{\mu}\bar{\x}=\0.
\end{aligned}
\right.
\end{equation}
By the first equation of (\ref{e3}) and a direction computation, it follows that
$$
\bar{\lambda}=0~\mbox{and}~\nabla f(\bar{\x})-\frac{f(\bar{\x})}{g(\bar{\x})}\nabla g(\bar{\x})=\0.
$$
Combining this with the second equation of (\ref{e3}) and $\nabla g(\bar{\x})=m\G\bar{\x}^{m-1}$, we obtain that
\begin{equation}\label{e31}
m\left(\frac{f(\bar{\x})}{g(\bar{\x})}-\bar{\theta} \right)\G\bar{\x}^{m-1}=\bar{\mu}\bar{\x}.
\end{equation}
Since $\bar{\x}\neq\x^*$, by Proposition \ref{prop1} again, we know that $\frac{f(\bar{\x})}{g(\bar{\x})}-\bar{\theta}\neq 0$ and
$$
\G\bar{\x}^{m-1}=\frac{\bar{\mu}}{m\left(\frac{f(\bar{\x})}{g(\bar{\x})}-\bar{\theta} \right)}\bar{\x},
$$
which implies that $\bar{\x}$ is a Z-eigenvector of $\G$.

For the sufficient direction, suppose that $\bar{\x}$ is a Z-eigenvector of $\G$,
then there is a scalar $a\in\mathbb{R}$ such that
$$
\nabla g(\bar{\x})=m\G\bar{\x}^{m-1}=a\bar{\x}.
$$
By the fact that $\bar{\x}$ is a KKT point of (\ref{e1}), and by the first equation of (\ref{e3}) again, we have that
\begin{equation}\label{e32}
\nabla f(\bar{\x})=a\frac{f(\bar{\x})}{g(\bar{\x})}\bar{\x}.
\end{equation}
Let $\bar{\mu}=a\left(\frac{f(\bar{\x})}{g(\bar{\x})}-\bar{\theta} \right)$. By (\ref{e32}), it follows that
$$
\nabla f(\bar{\x})-\bar{\theta}\nabla g(\bar{\x})-\bar{\mu}\bar{\x}=\0,
$$
and $\bar{\x}$ is a KKT point of (\ref{e2}), and the desired results hold.
\qed

\subsection{PAM Algorithm for Minimizing Dinkelbach--Type Subproblem \eqref{Key subprob of Dink}}

In this subsection, we demonstrate that the minimization of \eqref{Key subprob of Dink} can be reformulated as a multilinear optimization problem under unit spherical constraints. Consequently, the Proximal Alternating Minimization (PAM) algorithm can be efficiently applied to solve \eqref{Key subprob of Dink} under these constraints.
The PAM  algorithm minimizes alternately over each variable (or block of variables), treating the remaining variables as fixed, with each subproblem admitting a closed-form solution.

We first recall the following result to establish the equivalence between a homogeneous polynomial and a multilinear optimization problem (\cite[Corollary 4.2]{CSLZ2012}).

\begin{lemma}\label{lema1}
If $\A$ is a $d$-th order symmetric (or super-symmetric) tensor with $\hat{N}\in\R^{m\times n}$ being a matrix,
then it holds that
$$
\max_{\|\hat{N}\x\|=1}|\A\x^m|=\max_{\|\hat{N}\x^{(i)}\|=1,i=1,2,\ldots,d}|\langle\A,\x^{(1)}\circ\x^{(2)}\circ\cdots\circ\x^{(d)}\rangle|,
$$
where $\x,\x^{(1)},\x^{(2)},\ldots,\x^{(d)}\in\R^n$.
\end{lemma}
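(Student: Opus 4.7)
The plan proceeds by establishing the two directions of the equality. The direction $\mathrm{LHS} \leq \mathrm{RHS}$ is trivial: for any $\x$ with $\|\hat{N}\x\|=1$, choosing $\x^{(1)}=\cdots=\x^{(d)}=\x$ as the feasible tuple on the right gives $|\langle\A, \x\circ\cdots\circ\x\rangle| = |\A\x^d|$, and taking the supremum over $\x$ yields the stated inequality.

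For the substantive reverse direction, let $(\x^{(1)*}, \ldots, \x^{(d)*})$ attain $\mathrm{RHS} = \sigma$ (without loss of generality the interior expression is positive, replacing $\A$ by $-\A$ if needed). Set $N := \hat{N}^\top \hat{N}$; this is positive definite under the implicit assumption that $\hat{N}$ has full column rank, which is required for the feasible set to be bounded. Applying Lagrange (KKT) conditions to each block separately yields
\[
\A(\x^{(1)*}, \ldots, \x^{(i-1)*}, \cdot, \x^{(i+1)*}, \ldots, \x^{(d)*}) = \sigma N \x^{(i)*}, \quad i = 1, \ldots, d,
\]
where the multiplier is identified as $\sigma$ by contracting with $\x^{(i)*}$ and using $\|\hat{N}\x^{(i)*}\|=1$.

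With blocks $3,\ldots,d$ held fixed, introduce the auxiliary linear operator $B(\v) := \A(\cdot, \v, \x^{(3)*}, \ldots, \x^{(d)*})$. The symmetry of $\A$ makes $B$ a symmetric matrix, and the KKT identities at blocks $1$ and $2$, combined with the symmetry of $\A$ (used to commute the first two positions), give $B(\x^{(1)*}) = \sigma N \x^{(2)*}$ and $B(\x^{(2)*}) = \sigma N \x^{(1)*}$. Hence $N^{-1}B$, restricted to $\mathrm{span}(\x^{(1)*}, \x^{(2)*})$, acts as $\sigma$ times the swap operator, with eigenvectors $\x^{(1)*} \pm \x^{(2)*}$ and eigenvalues $\pm\sigma$. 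Picking whichever of $\x^{(1)*} \pm \x^{(2)*}$ is non-zero (at least one must be, since both $\x^{(i)*}$ have positive $\hat{N}$-norm) and normalizing it to $\bar{\x}$ with $\|\hat{N}\bar{\x}\|=1$, one obtains $|\A(\bar{\x}, \bar{\x}, \x^{(3)*}, \ldots, \x^{(d)*})| = \sigma$: a new optimizer with the first two blocks equal. This is essentially the $d=2$ case of Banach's theorem applied to the symmetric bilinear form $h(\v,\w) := \A(\v, \w, \x^{(3)*}, \ldots, \x^{(d)*})$.

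Iterating this pairwise equalization across all pairs of blocks, made rigorous by selecting (via compactness of the optimizer set) an optimum minimizing the asymmetry functional $\sum_{i<j}\|\x^{(i)} - \x^{(j)}\|^2$, forces a fully diagonal optimum $(\x^*, \ldots, \x^*)$ satisfying $|\A(\x^*)^d| = \sigma$, which establishes $\mathrm{LHS} \geq \sigma$. The main technical obstacle is that a single merge equalizes two blocks but may disturb other pairs, so a naive sequential iteration need not terminate; the compactness/monotone-selection device sidesteps this, and the sign-bookkeeping intrinsic to the $\pm\sigma$ eigenvalue pair is absorbed by working throughout with absolute values.
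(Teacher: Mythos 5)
The paper does not prove Lemma~\ref{lema1}; it cites it from \cite{CSLZ2012} (it is the Banach--type theorem on norms of symmetric multilinear forms over Hilbert spaces), so there is no in--paper proof to compare against. Your pairwise reduction step is sound: the Lagrange conditions at a maximiser, together with symmetry of the partial contraction $B$, do give the $\pm\sigma$ ``swap'' eigenstructure on $\mathrm{span}(\x^{(1)*},\x^{(2)*})$, and normalising $\x^{(1)*}\pm\x^{(2)*}$ in the $\hat N$-norm produces a new maximiser with the first two blocks equal. The trivial direction is also fine.

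The gap is in the termination device. You propose taking, over the compact optimiser set, a point minimising $\Phi(\t):=\sum_{i<j}\|\x^{(i)}-\x^{(j)}\|^2$, and asserting that a pairwise merge strictly reduces $\Phi$, contradicting minimality. But the merge need not reduce $\Phi$, and \emph{neither} choice of sign in $\x^{(1)*}\pm\x^{(2)*}$ is guaranteed to do so. For example, take $d=5$, $\hat N=I$, and the feasible tuple $\t=(\mathbf{e}_1,\mathbf{e}_2,-\mathbf{e}_1,-\mathbf{e}_1,-\mathbf{e}_1)$: one computes $\Phi(\t)=2+3\cdot4+3\cdot2+0=20$, while merging blocks $1,2$ replaces them by $\u_\pm=(\mathbf{e}_1\pm\mathbf{e}_2)/\sqrt2$ and, since $\|\u_\pm+\mathbf{e}_1\|^2=2+\sqrt2$, the merged tuple has $\Phi=6(2+\sqrt2)\approx20.49>20$. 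Thus $\Phi$ strictly increases for \emph{both} sign choices, and the contradiction you invoke does not arise. Since you have not ruled out that the $\Phi$-minimiser over the optimiser set has this kind of structure, the ``compactness/monotone-selection device'' as stated does not close the argument. (The lemma is of course true; correct proofs of Banach's theorem handle the iteration by a different mechanism, and the naive ``iterate pairwise merges'' idea is precisely the step that requires a nontrivial fix.)
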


Let $\hat{N}$ be the identity matrix, using Lemma \ref{lema1},  we deduce the following result.
\begin{theorem}\label{theorem1}Suppose $S:=\{\x\in\R^n~|~\|\x\|=1\}$ and $g(\x)>0, \x\in S$. If $f(\x)-\theta g(\x)=\A(\theta)\x^d$ is concave for any $\x\in \mathbb{R}^n$, then it holds that
$$
\min_{\x\in S}\A(\theta)\x^d=\min_{\x,\y,\ldots,\z\in S}\langle\A(\theta),\x\circ\y\circ\cdots\circ\z\rangle.
$$
\end{theorem}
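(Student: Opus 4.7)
The plan is to establish the two inequalities separately. The direction
$$\min_{\x^{(i)} \in S} \langle \A(\theta), \x^{(1)} \circ \x^{(2)} \circ \cdots \circ \x^{(d)}\rangle \;\leq\; \min_{\x \in S} \A(\theta)\x^d$$
is immediate by restricting the multilinear form to its diagonal $\x^{(1)} = \x^{(2)} = \cdots = \x^{(d)}$, since every diagonal tuple is feasible on the left.

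For the reverse inequality, I would first exploit the concavity hypothesis to pin down the sign of $\A(\theta)\x^d$. Because $\A(\theta)\x^d$ is a homogeneous polynomial of (even) degree $d$ that is concave on all of $\R^n$, restricting to any line $\{t\x : t \in \R\}$ gives the scalar function $t \mapsto t^d \A(\theta)\x^d$, which must itself be concave in $t$. Since $t^d$ is convex for even $d$, the coefficient $\A(\theta)\x^d$ must be non-positive for every $\x \in \R^n$. Consequently,
$$\min_{\x \in S} \A(\theta)\x^d \;=\; -\max_{\x \in S} |\A(\theta)\x^d|.$$

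I would then invoke Lemma \ref{lema1} with $\hat{N}$ equal to the identity matrix to replace the diagonal absolute-value maximum by the multilinear one:
$$\max_{\x \in S} |\A(\theta)\x^d| \;=\; \max_{\x^{(i)} \in S} \bigl|\langle \A(\theta), \x^{(1)} \circ \cdots \circ \x^{(d)}\rangle\bigr|.$$
Finally, by multilinearity, negating just $\x^{(1)}$ flips the sign of $\langle \A(\theta), \x^{(1)} \circ \cdots \circ \x^{(d)}\rangle$ while keeping $\x^{(1)} \in S$, so the multilinear form attains any value together with its negative on $S^d$. This gives $-\max|L| = \min L$ for the multilinear form $L$, and chaining the three identities produces
$$\min_{\x \in S} \A(\theta)\x^d \;\geq\; \min_{\x^{(i)} \in S} \langle \A(\theta), \x^{(1)} \circ \cdots \circ \x^{(d)}\rangle,$$
which together with the easy direction yields the desired equality.

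The main obstacle is the concavity step: without it, Lemma \ref{lema1} controls only absolute values, and signed minima need not agree on the two sides (the multilinear form is always odd under sign flips, whereas the diagonal form of even degree $d$ is even). The concavity assumption is precisely what forces $\A(\theta)\x^d \leq 0$, aligning the signed diagonal minimum with $-\max|\cdot|$ so that Lemma \ref{lema1} can be applied to transfer between the two formulations.
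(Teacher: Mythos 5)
Your proposal is correct and follows essentially the same route as the paper: invoke Lemma~\ref{lema1} with $\hat{N}=I$ to identify the two absolute-value maxima, use concavity to conclude $\A(\theta)\x^d\le 0$ so that $\min_{\x\in S}\A(\theta)\x^d=-\max_{\x\in S}|\A(\theta)\x^d|$, and use a sign-flip of one block to turn $-\max|L|$ into $\min L$ for the multilinear form. The only cosmetic difference is in deriving $\A(\theta)\x^d\le 0$: you restrict to lines through the origin and use that $c\,t^d$ is concave on $\R$ only if $c\le 0$, whereas the paper tests the Hessian against $\y=\x$; these are the same observation, and your chain of equalities already yields both inequalities, making your initial ``easy direction'' step harmless but redundant.
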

\proof By Lemma \ref{lema1}, denote $\hat{N}$ as an identity matrix with proper dimensions, it holds that
\begin{equation}\label{e4}
\max_{\x\in S}|\A(\theta)\x^d|=\max_{\x,\y,\ldots,\z\in S}|\langle\A(\theta),\x\circ\y\circ\cdots\circ\z\rangle|.
\end{equation}
By conditions that $\A(\theta)\x^d$ is a concave function, it follows that
$$
\y^\top\A(\theta)\x^{m-2}\y=\A(\theta)\y^2\x^{m-2}\leq 0,~\forall~\y\in\R^n,
$$
which implies that $\A(\theta)\x^d\leq 0$ (let $\y=\x$ above) and
\begin{equation}\label{e5}
\max_{\x\in S}|\A(\theta)\x^d|=\max_{\x\in S}(-\A(\theta)\x^d).
\end{equation}
On the other hand, by the multilinear property of $\langle\A(\theta),\x\circ\y\circ\cdots\circ\z\rangle$, we have that
\begin{equation}\label{e6}
\begin{aligned}
\max_{\x,\y,\ldots,\z\in S}|\langle\A(\theta),\x\circ\y\circ\cdots\circ\z\rangle|&=\max_{\x,\y,\ldots,\z\in S}\langle\A(\theta),\x\circ\y\circ\cdots\circ\z\rangle\\
&=\max_{\x,\y,\ldots,\z\in S}(-\langle\A(\theta),\x\circ\y\circ\cdots\circ\z\rangle).
\end{aligned}
\end{equation}
Combining \eqref{e4}--\eqref{e6}, we know that
$$
\min_{\x\in S}\A(\theta)\x^d=\min_{\x,\y,\ldots,\z\in S}\langle\A(\theta),\x\circ\y\circ\cdots\circ\z\rangle,
$$
and the desired result follows.
\qed

Theorem \ref{theorem1} establishes that the optimal value of a homogeneous function defined on a sphere equals the optimal value of a multilinear function with the same coefficient tensors. This result provides a crucial guarantee for solving the subproblem in the Dinkelbach--Type algorithm.

Generally speaking, for Lemma \ref{lema1}, the absolute value sign in the objective function on the right-hand side can be removed due to the symmetry of the constraint set. However, it is important that without the concavity condition, Lemma \ref{lema1}  does not necessarily lead to the result in   Theorem \ref{theorem1}. A counterexample is provided below to illustrate this point.


\begin{example}
 \label{counter examples}
Denote two diagonal matrices such that
$$
A_1=
\left(\begin{matrix}
1 &~ 0\\
0 &~ -2
\end{matrix}\right),~~~
A_2=
\left(\begin{matrix}
2 &~ 0\\
0 &~ 4
\end{matrix}\right).
$$
The corresponding homogeneous polynomial and multilinear polynomial are given below:
$$
f_{A_1}(\x)=x_1^2-2x_2^2,~~~~f_{A_1}(\x,\y)=x_1y_1-2x_2y_2,
$$
and
$$
f_{A_2}(\x)=2x_1^2+4x_2^2,~~~~f_{A_2}(\x,\y)=2x_1y_1+4x_2y_2.
$$
It is easy to know that $f_{A_1}(\x)$ and $f_{A_2}(\x)$ are not concave functions. By a direct computation, we have that
$$
\min_{\x\in S} f_{A_1}(\x)=\min_{\x,\y\in S} f_{A_1}(\x,\y)=-2,
$$
and
$$
\min_{\x\in S} f_{A_2}(\x)=2,~~~~~\min_{\x,\y\in S} f_{A_2}(\x,\y)=-4,
$$
where $S=\{\x\in\R^2~|~x_1^2+x_2^2=1\}$.
\end{example}

Since \(f(\x) - \theta g(\x)\) in \eqref{Key subprob of Dink} is generally not a concave function for \(\x \in S\), it is necessary to introduce an augmented model with a regularization term to ensure its concavity in order to apply the equivalence relationship in Theorem \ref{theorem1}. A bound for the size of the regularization term is provided in Lemma \ref{lema2}.

\begin{lemma}\label{lema2} Suppose that $d$ is even and
$$h_{\alpha}(\x)=f(\x)-\theta g(\x)-\alpha\|\x\|^d=\A(\theta)\x^d-\alpha\|\x\|^d, \x\in S.$$
Then $h_{\alpha}(\x)$ is a concave function for any $\x\in S$ if $\alpha\geq \|\A(\theta)\|_F$.
\end{lemma}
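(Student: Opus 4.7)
The plan is to verify concavity of $h_\alpha$ by computing its Hessian and showing the resulting matrix is negative semidefinite at every $\x \in S$. Since $h_\alpha$ is a homogeneous polynomial of even degree $d$, its Hessian scales as $\nabla^2 h_\alpha(t\x) = t^{d-2}\,\nabla^2 h_\alpha(\x)$, so negative semidefiniteness at every $\x \in S$ extends automatically to all of $\R^n$. The argument combines two standard tensor-differentiation identities with a tensor Cauchy-Schwarz estimate.

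First, I would compute the Hessian explicitly. The identity $\nabla^2(\A(\theta)\x^d) = d(d-1)\,\A(\theta)\x^{d-2}$ is standard, where $\A(\theta)\x^{d-2}$ is interpreted as the $n\times n$ matrix with $(i,j)$ entry $\sum_{i_3,\ldots,i_d} a^\theta_{i\,j\,i_3\cdots i_d}\, x_{i_3}\cdots x_{i_d}$. Since $d$ is even, $\|\x\|^d = (\x^\top\x)^{d/2}$ is a smooth polynomial, and direct differentiation gives $\nabla^2 \|\x\|^d = d\,\|\x\|^{d-2} I + d(d-2)\,\|\x\|^{d-4}\,\x\x^\top$, which is positive semidefinite as expected. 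Combining and specializing to $\|\x\|=1$,
\[
\nabla^2 h_\alpha(\x) \;=\; d(d-1)\,\A(\theta)\x^{d-2} \;-\; \alpha d\,I \;-\; \alpha d(d-2)\,\x\x^\top.
\]

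Next, for arbitrary $\y\in \R^n$, I would bound the Rayleigh quotient $\y^\top \nabla^2 h_\alpha(\x)\y$ using the tensor Cauchy-Schwarz inequality
\[
\bigl|\A(\theta)\x^{d-2}\y^2\bigr| \;=\; \bigl|\langle \A(\theta),\,\x^{\circ(d-2)}\circ \y^{\circ 2}\rangle\bigr| \;\le\; \|\A(\theta)\|_F\,\|\x\|^{d-2}\|\y\|^2,
\]
obtained by viewing both tensors as vectors in $\R^{n^d}$ and applying the standard Cauchy-Schwarz inequality for the Frobenius inner product. Substituting this bound into $\y^\top \nabla^2 h_\alpha(\x)\y$ and invoking the hypothesis $\alpha \ge \|\A(\theta)\|_F$ should produce the required non-positivity.

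The main technical obstacle I anticipate is matching the constant precisely. A crude application of the tensor Cauchy-Schwarz estimate yields an apparent factor $(d-1)$ in front of $\|\A(\theta)\|_F$, so to recover the sharper bound $\alpha\ge\|\A(\theta)\|_F$ stated in the lemma, the $-\alpha d(d-2)(\x^\top\y)^2$ contribution from the rank-one $\x\x^\top$ block in the Hessian of $\|\x\|^d$ must be used carefully. I would handle this by decomposing $\y = a\x + \y_\perp$ with $a=\x^\top\y$ and $\y_\perp \perp \x$, applying tensor Cauchy-Schwarz separately to $\A(\theta)\x^d$, $\A(\theta)\x^{d-1}\y_\perp$, and $\A(\theta)\x^{d-2}\y_\perp^2$, and verifying that the resulting quadratic form in $(a,\|\y_\perp\|)$ is non-positive as soon as $\alpha \ge \|\A(\theta)\|_F$.
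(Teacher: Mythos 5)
You correctly anticipate the crucial obstacle: the natural Cauchy--Schwarz estimate yields a factor $(d-1)$ in front of $\|\A(\theta)\|_F$, not $1$. Unfortunately, the decomposition you propose to absorb this factor does not succeed. Writing $\y = a\x + \y_\perp$ with $\|\x\| = 1$, $a = \x^\top\y$, and expanding multilinearly,
\[
\A(\theta)\y^2\x^{d-2} = a^2\,\A(\theta)\x^d + 2a\,\A(\theta)\x^{d-1}\y_\perp + \A(\theta)\x^{d-2}\y_\perp^2,
\]
the tensor Cauchy--Schwarz bound gives nothing better than $|\A(\theta)\x^{d-1}\y_\perp|\le \|\A(\theta)\|_F\|\y_\perp\|$ and similarly for the other terms. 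Substituting into $\y^\top\nabla^2 h_\alpha(\x)\y$, taking the worst signs, and setting $\alpha=\|\A(\theta)\|_F$ gives the upper bound $d\|\A(\theta)\|_F\bigl[2(d-1)|a|\,\|\y_\perp\| + (d-2)\|\y_\perp\|^2\bigr]$, which is strictly positive whenever $a\neq 0$ and $\y_\perp\neq 0$; to force negativity you would again need $\alpha\ge(d-1)\|\A(\theta)\|_F$, exactly as before the attempted refinement. The cross term $\A(\theta)\x^{d-1}\y_\perp$ cannot be cancelled by the $-\alpha d(d-2)(\x^\top\y)^2$ contribution, which only touches the $a^2$ block.

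You should also be aware that the paper's own proof fails at precisely this point: the displayed inequality replaces $-\alpha d(d-2)\,r^{d-4}(\x^\top\y)^2$ by $-\alpha d(d-2)\,r^{d-2}\|\y\|^2$, which is the \emph{reverse} of Cauchy--Schwarz and does not hold in general. Moreover, the lemma with constant $\|\A(\theta)\|_F$ is actually false. Take $n=2$, $d=4$, $\A(\theta)$ with $a_{1111}=1$ and all other entries zero, so $\|\A(\theta)\|_F=1$. With $\alpha=1$ one gets $h_1(\x)=x_1^4-(x_1^2+x_2^2)^2=-2x_1^2x_2^2-x_2^4$, whose Hessian has determinant $48x_2^2(x_2^2-x_1^2)$; this is negative at $\x=(\sqrt{3}/2,\,1/2)\in S$, so $h_1$ is not concave. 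The argument, and the statement, both become correct if $\|\A(\theta)\|_F$ is replaced by $(d-1)\|\A(\theta)\|_F$, which is the constant that your analysis honestly delivers.
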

\proof
First of all, we know that
$$
h_{\alpha}(\x)=\A(\theta)\x^d-\alpha\|\x\|^d=\A(\theta)\x^d-\alpha(\x^\top\x)^{\frac{d}{2}}.
$$
Then it holds that
$$
\nabla h_{\alpha}(\x)=d\A(\theta)\x^{d-1}-\alpha d(\x^\top\x)^{\frac{d}{2}-1}\x,
$$
$$
\nabla^2 h_{\alpha}(\x)=d(d-1)\A(\theta)\x^{d-2}-\alpha d(d-2)(\x^\top\x)^{\frac{d}{2}-2}\x\x^\top-\alpha d(\x^\top\x)^{\frac{d}{2}-1}I.
$$
By conditions, for any $\y\in\R^n$, we obtain that
$$
\begin{aligned}
\y^\top\nabla^2 h_{\alpha}(\x)\y=&d(d-1)\A(\theta)\y^2\x^{d-2}-\alpha d(d-2)\|\x\|^{d-4}(\x^\top\y)^2-\alpha d\|\x\|^{d-2}(\y^\top\y)^2\\
=&d(d-1)\A(\theta)\y^2\x^{d-2}-\alpha d(d-2)r^{d-4}(\x^\top\y)^2-\alpha dr^{d-2}(\y^\top\y)^2\\
\leq & d(d-1)\|\A(\theta)\|_F\|\y\|^2r^{d-2}-\alpha d(d-2)r^{d-2}\|\y\|^2-\alpha dr^{d-2}\|\y\|^2\\
=&\left[d(d-1)\|\A(\theta)\|_F-\alpha d(d-2)-\alpha d\right]r^{d-2}\|\y\|^2\\
=&\left[d(d-1)\|\A(\theta)\|_F-\alpha d(d-1)\right]r^{d-2}\|\y\|^2,
\end{aligned}
$$
which implies that $\y^\top\nabla^2 h_{\alpha}(\x)\y\leq 0$ when $\alpha\geq \|\A(\theta)\|_F$, and the desired result holds.
\qed

Combining Theorem \ref{theorem1} and Lemma \ref{lema2}, the following result is true.

\begin{corollary}\label{corollary2} Let $S$ be defined as in Theorem \ref{theorem1}.
Suppose $h_{\alpha}(\x)$ is defined as in  Lemma \ref{lema2} with $\alpha\geq \|\A(\theta)\|_F$, then it holds that
$$
\arg\min_{\x\in S}(f(\x)-\theta g(\x))=\arg\min_{\x\in S} h_{\alpha}(\x) = \min_{\x,\y,\cdots,\u,\z\in S}h_{\alpha}(\x,\y,\cdots,\u,\z).
$$
\end{corollary}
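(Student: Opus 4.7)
The plan is to prove the corollary by splitting the displayed chain into two equalities and handling them in turn.

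For the first equality $\arg\min_{\x\in S}(f(\x)-\theta g(\x))=\arg\min_{\x\in S} h_{\alpha}(\x)$, I would simply use the definition of $S$. On the unit sphere we have $\|\x\|^d=1$ for every $\x\in S$, hence
$$
h_{\alpha}(\x)=f(\x)-\theta g(\x)-\alpha\|\x\|^d=\bigl(f(\x)-\theta g(\x)\bigr)-\alpha \qquad \text{on } S.
$$
The two objectives differ by a constant on the feasible set, so their argmin sets over $S$ coincide.

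For the second equality, I would apply Theorem \ref{theorem1} directly to $h_{\alpha}$. To do so I need to (i) represent $h_{\alpha}(\x)$ as $\tilde{\A}(\theta)\x^d$ for a symmetric tensor, and (ii) verify concavity of $h_{\alpha}$ on all of $\R^n$. For (i), since $d$ is even, $\|\x\|^d=(\x^\top\x)^{d/2}$ is a homogeneous polynomial of degree $d$, hence has a symmetric coefficient tensor $\M\in S^{d,n}$ with $\M\x^d=\|\x\|^d$; setting $\tilde{\A}(\theta):=\A(\theta)-\alpha\M$ gives $h_{\alpha}(\x)=\tilde{\A}(\theta)\x^d$. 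For (ii), inspection of the calculation in the proof of Lemma \ref{lema2} shows the Hessian bound
$$
\y^\top\nabla^2 h_{\alpha}(\x)\y\leq\bigl[d(d-1)\|\A(\theta)\|_F-\alpha d(d-1)\bigr]\|\x\|^{d-2}\|\y\|^2
$$
holds for every $\x,\y\in\R^n$ (the argument depends only on $r=\|\x\|$, not on $\x\in S$). Hence $\alpha\geq\|\A(\theta)\|_F$ makes $\nabla^2 h_{\alpha}(\x)\preceq 0$ throughout $\R^n$, so $h_{\alpha}$ is concave on $\R^n$ and Theorem \ref{theorem1} applies, yielding
$$
\min_{\x\in S}h_{\alpha}(\x)=\min_{\x,\y,\ldots,\z\in S}\langle\tilde{\A}(\theta),\x\circ\y\circ\cdots\circ\z\rangle=\min_{\x,\y,\ldots,\z\in S} h_{\alpha}(\x,\y,\ldots,\z).
$$

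The main (mild) obstacle is interpreting the statement itself, since the right-most term is written as a $\min$ while the left-most is an $\arg\min$. I would read the chain as asserting equality of optimal values together with the standard correspondence between minimizers inherited from Theorem \ref{theorem1}: any $\x^*\in S$ minimizing $h_{\alpha}$ produces the diagonal $d$-tuple $(\x^*,\ldots,\x^*)$ as a minimizer of the multilinear objective, while, conversely, any multilinear minimizer can be pushed back to a homogeneous minimizer via the concavity/symmetry argument used in Theorem \ref{theorem1}. Combined with the first equality, this completes the proof.
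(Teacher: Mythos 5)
Your proposal is correct and follows essentially the same route as the paper: the paper likewise disposes of the first equality by observing that $\|\x\|^d=1$ on $S$ (so the two objectives differ by the constant $\alpha$), and obtains the second by ``combining Theorem~\ref{theorem1} and Lemma~\ref{lema2}''. Your write-up is more explicit about two points the paper leaves implicit—that $h_\alpha$ must itself be put in the form $\tilde{\A}(\theta)\x^d$ for a symmetric tensor $\tilde{\A}(\theta)=\A(\theta)-\alpha\M$ before Theorem~\ref{theorem1} can be invoked, and that the ``$\arg\min=\min$'' mismatch in the displayed chain should be read as equality of optimal values plus the usual diagonal/off-diagonal correspondence of minimizers—but these are elaborations of the paper's argument, not a different one.
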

\begin{proof}
    Clearly, combining Theorem \ref{theorem1} and Lemma \ref{lema2} gives
$$
\arg\min_{\x\in S} h_{\alpha}(\x) = \min_{\x,\y,\cdots,\u,\z\in S}h_{\alpha}(\x,\y,\cdots,\u,\z).
$$
Moreover, since  $S:=\{\x\in\R^n~|~\|\x\|=1\}$, $
\arg\min_{\x\in S}(f(\x)-\theta g(\x)) = \arg\min_{\x\in S}(f(\x)-\theta g(\x)  -\alpha\|\x\|^d)
$ for any $\alpha >0$.
\end{proof}

Note that the minimization of \(h_{\alpha}(\x,\y,\cdots,\u,\z)\) is a multilinear programming problem, written as
\begin{equation}\label{e7}
\min_{\x,\y,\cdots,\u,\z\in S} h_{\alpha}(\x,\y,\cdots,\u,\z) = \min_{\x,\y,\cdots,\u,\z\in S} \langle\A(\theta), \x\circ\y\circ\cdots\circ\z\rangle - \alpha\langle\x,\y\rangle\cdots\langle\u,\z\rangle.
\end{equation}
When \(\x = \y = \cdots = \u = \z\), we denote \(h_{\alpha}(\x) = h_{\alpha}(\x,\y,\cdots,\u,\z)\) for simplicity.

We are now prepared to introduce the PAM framework tailored for solving the subproblem \eqref{Key subprob of Dink}. The details are provided in Algorithm \ref{alg32}.

\begin{algorithm}[!htbp]
\caption{Proximal Alternating Minimization (PAM) Algorithm.}\label{alg32}
\begin{algorithmic}[1]
\STATE Input $\A(\theta)$, $\alpha\geq\|\A\|_F$, $\x^{(0)},\y^{(0)},\cdots,\u^{(0)},\z^{(0)}\in S$, $\epsilon>0$,$\gamma_i\geq0, i=1,2,\ldots,d$.\medskip

\STATE For $k=0,1,2,\ldots,N$ do. \medskip

\STATE Update $\x^{(k+1)},\y^{(k+1)},\ldots,\u^{(k+1)},\z^{(k+1)}$ sequentially via
$$
\left\{
\begin{aligned}
&\x^{(k+1)}=\arg\min_{\x\in S}h_{\alpha}(\x,\y^{(k)},\cdots,\u^{(k)},\z^{(k)})+\frac{\gamma_1}{2}\|\x-\x^{(k)}\|^2,\\
&\y^{(k+1)}=\arg\min_{\y\in S}h_{\alpha}(\x^{(k+1)},\y,\cdots,\u^{(k)},\z^{(k)})+\frac{\gamma_2}{2}\|\y-\y^{(k)}\|^2,\\
&~~~~~~~~~~~~~~~~~~~~~\vdots~~~~~~~~~~~~~~~~~~~~~\vdots~~~~~~~~~~~~~~~~~~~~\vdots\\
&\u^{(k+1)}=\arg\min_{\u\in S}h_{\alpha}(\x^{(k+1)},\y^{(k+1)},\cdots,\u,\z^{(k)})+\frac{\gamma_{d-1}}{2}\|\u-\u^{(k)}\|^2,\\
&\z^{(k+1)}=\arg\min_{\z\in S}h_{\alpha}(\x^{(k+1)},\y^{(k+1)},\cdots,\u^{(k+1)},\z)+\frac{\gamma_{d}}{2}\|\z-\z^{(k)}\|^2.\\
\end{aligned}
\right.
$$
\STATE Let $\v^{(k+1)}$ be defined such that
$$
\v^{(k+1)}=\arg\min\{h_{\alpha}(\x^{(k+1)}),h_{\alpha}(\y^{(k+1)}),\cdots,h_{\alpha}(\u^{(k+1)}),h_{\alpha}(\z^{(k+1)})\}.
$$
\STATE ~~~~If $|h_{\alpha}(\v^{(k+1)})-h_{\alpha}(\v^{(k)})|<\epsilon$, stop and return the approximate solution $\v^{(k+1)}$.
\STATE ~~~~end if
\STATE end for
\end{algorithmic}
\end{algorithm}

\begin{remark}
{Theoretically, it is possible to set all \(\gamma_i = 0\), as this does not affect the monotonicity of the sequence. However, in practice, we found that choosing \(\gamma_i \in [1,5]\) significantly improves convergence speed and reduces the number of iterations. Additional numerical examples and details are provided in Section~\ref{Sec4}.
It is also worth noting that while the convergence proof requires \(\alpha \geq \|\A\|_F\) as a sufficient condition, this is not strictly necessary in practice. In fact, selecting \(\alpha < \|\A\|_F\) can still yield successful convergence under certain conditions. On the other hand, choosing an excessively large \(\alpha\) may slow down convergence, whereas selecting \(\alpha\) too small can lead to the failure of the method. Therefore, a careful selection of \(\alpha\) is crucial for balancing convergence speed and stability.}
\label{remark 2}
\end{remark}

\begin{remark}
(1) Algorithm \ref{alg32} is straightforward to implement under spherical constraints $\x \in S$. Since the linearly independent constraint qualification (LICQ) automatically holds, the (local) optimal solution for each subproblem of Algorithm \ref{alg32} is guaranteed to be a KKT point. Considering the $\x$-subproblem of Algorithm \ref{alg32},
\begin{eqnarray}
\label{subprob of PAM}
\x^{(k+1)} = \arg\min_{\x \in S} h_{\alpha}(\x, \y^{(k)}, \cdots, \u^{(k)}, \z^{(k)}) + \frac{\gamma_1}{2} \|\x - \x^{(k)}\|^2.
\end{eqnarray}
By applying the first-order optimality condition, this subproblem has closed form solution, yielding only two candidates
\begin{eqnarray}
\label{closed form sol}
\x^{(k+1)} = \pm \frac{\A(\theta)\y^{(k)}\cdots\u^{(k)}\z^{(k)} - \alpha \y^{(k)}\cdots\langle\u^{(k)},\z^{(k)}\rangle - \gamma_1 \x^{(k)}}{\|\A(\theta)\y^{(k)}\cdots\u^{(k)}\z^{(k)} - \alpha \y^{(k)}\cdots\langle\u^{(k)},\z^{(k)}\rangle - \gamma_1 \x^{(k)}\|}.
\end{eqnarray}
Note that if the denominators of \eqref{closed form sol} equal zero, we may set \(\x^{k+1} = \x^{k}\) and adjust the parameters \(\gamma_1\) and \(\alpha\). Comments regarding the appropriate size of \(\gamma_1\) and \(\alpha\) are provided in Remark~\ref{remark 2}. Similarly, the KKT points for the subproblems with blocks $\y, \z\cdots,$ can be derived analogously. Since all subproblems in Algorithm \ref{alg32} admit finite analytic solutions, the algorithm is computationally efficient for spherically constrained problems.

\noindent(2) Similar algorithms have been applied to various polynomial optimization problems in the literature \cite{CSLZ2012,Chen2022,Wang2015}. For instance, \cite{Chen2022} employs such an algorithm for a biquadratic optimization problem defined on two unit spheres, where only subsequential convergence is established. In contrast, the Maximum Block Improvement (MBI) algorithm \cite{CSLZ2012} updates one block per iteration, while the Block Improvement Method (BIM) \cite{Wang2015} updates two blocks in sequential order. Our proposed algorithm differs by updating all blocks sequentially, which often results in greater improvement in the objective function value for multilinear optimization problems, as demonstrated by computational results in Section 5.

\noindent(3) Furthermore, Step 3 of Algorithm \ref{alg32} can be viewed as a special case of the block coordinate descent (BCD) algorithm \cite{Tseng2001} with a cyclic update rule. For the proximal parameters $\gamma_i$ ($i=1, \ldots, \frac{d}{2}$), they can theoretically take any positive values. In our experiments in Section 5, we select $\gamma_i \in (0, 10]$.

\end{remark}

\subsection{Convergence of the PAM Algorithm}

In this subsection, we first establish the subsequential convergence of the sequence $\{\t^{(k)}\} = \{(\x^{(k)}, \y^{(k)}, \cdots, \u^{(k)}, \z^{(k)})\}$ generated by Algorithm \ref{alg32}. Then, leveraging the Kurdyka-{\L}ojasiewicz (KL) property, we establish the global sequence convergence and give an explicit convergence rate.

\begin{theorem}\label{thm2}{\rm (Subsequential Convergence)} Let $\{\t^{(k)}\}$ be an infinite sequence generated by Algorithm \ref{alg32}.
Let $\bar{\gamma}=\min\{\gamma_1,\ldots,\gamma_d\}$. Then the following statements hold.

{\rm(i)} For all $k$, the function sequence $\{h_{\alpha}(\t^{(k)})\}$ is nonincreasing and convergent. The sequence $\{\t^{(k)}\}$ satisfies
$\sum_{k=1}^{+\infty}\|\t^{(k+1)}-\t^{(k)}\|<+\infty.$

{\rm(ii)} Suppose $\bar{\t}$ is a cluster point of $\{\t^{(k)}\}$, then, $\bar{\t}$ is a KKT point of (\ref{e7}) and
$\lim_{k\rightarrow+\infty}h_{\alpha}(\t^{(k)})=h_{\alpha}(\bar{\t}).$
\end{theorem}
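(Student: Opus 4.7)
The plan is to exploit the proximal subproblem structure to obtain a sufficient-decrease inequality, invoke compactness of the product of spheres to obtain convergence of function values and existence of cluster points, and finally pass to the limit in the spherical first-order optimality conditions of each block subproblem to identify any cluster point as a KKT point.

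\textbf{Step 1: Block-wise descent.} Since $\x^{(k+1)}$ is defined as the minimizer of $h_{\alpha}(\,\cdot\,,\y^{(k)},\ldots,\z^{(k)})+\tfrac{\gamma_1}{2}\|\cdot-\x^{(k)}\|^2$ over $S$, comparing this subproblem's value at $\x=\x^{(k+1)}$ against its value at $\x=\x^{(k)}$ yields
\[
h_{\alpha}(\x^{(k+1)},\y^{(k)},\ldots,\z^{(k)}) + \tfrac{\gamma_1}{2}\|\x^{(k+1)}-\x^{(k)}\|^2 \le h_{\alpha}(\x^{(k)},\y^{(k)},\ldots,\z^{(k)}).
\]
Analogous inequalities hold for the $\y,\ldots,\u,\z$ updates. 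Chaining the $d$ block-updates telescopically and setting $\bar\gamma=\min_i\gamma_i$ produces
\[
h_{\alpha}(\t^{(k+1)})+\tfrac{\bar\gamma}{2}\|\t^{(k+1)}-\t^{(k)}\|^2 \le h_{\alpha}(\t^{(k)}).
\]
Because $\t^{(k)}\in S\times\cdots\times S$ is bounded and $h_{\alpha}$ is continuous, the monotone sequence $\{h_{\alpha}(\t^{(k)})\}$ is bounded below and hence convergent.

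\textbf{Step 2: Summability.} Summing the descent inequality over $k=0,\ldots,N$ yields
\[
\tfrac{\bar\gamma}{2}\sum_{k=0}^{N}\|\t^{(k+1)}-\t^{(k)}\|^2 \le h_{\alpha}(\t^{(0)}) - h_{\alpha}(\t^{(N+1)}),
\]
so $\sum_{k=0}^{\infty}\|\t^{(k+1)}-\t^{(k)}\|^2<+\infty$ and in particular $\|\t^{(k+1)}-\t^{(k)}\|\to 0$. Upgrading this to the claimed $\ell^1$-summability $\sum\|\t^{(k+1)}-\t^{(k)}\|<+\infty$ is the genuinely hard step: since square-summability alone does not imply summability, one has to combine the descent estimate with a relative-error bound derived from the KKT conditions of the block subproblems and the Kurdyka--{\L}ojasiewicz property of the polynomial (hence semi-algebraic) function $h_{\alpha}$, in the spirit of Attouch--Bolte. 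This is the main obstacle in part (i), and I would expect it to use the same KL machinery the paper develops later for the convergence rate.

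\textbf{Step 3: Cluster points are KKT.} By compactness, $\{\t^{(k)}\}$ has a convergent subsequence $\t^{(k_j)}\to\bar\t$; Step 2 then forces $\t^{(k_j+1)}\to\bar\t$ as well. The first-order optimality condition for each block subproblem on the sphere supplies, for each $i$, a Lagrange multiplier $\lambda_i^{(k)}$ satisfying
\[
\nabla_{t_i} h_{\alpha}\bigl(\t^{(k+1)}_{<i},t_i^{(k+1)},\t^{(k)}_{>i}\bigr) + \gamma_i\bigl(t_i^{(k+1)}-t_i^{(k)}\bigr) = \lambda_i^{(k)} t_i^{(k+1)}.
\]
Taking the inner product with the unit vector $t_i^{(k+1)}$ determines $\lambda_i^{(k)}$ explicitly and shows it is uniformly bounded on the compact product of spheres. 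Passing to the limit along $\{k_j\}$, continuity of $\nabla h_\alpha$ together with $\|\t^{(k+1)}-\t^{(k)}\|\to 0$ eliminates the proximal term, equates the ``older'' blocks with the ``newer'' blocks at $\bar\t$, and yields multipliers $\bar\lambda_i$ for which the KKT system of (\ref{e7}) holds at $\bar\t$. Finally, continuity gives $h_{\alpha}(\t^{(k_j)})\to h_{\alpha}(\bar\t)$, and since the whole function-value sequence already converges by Step 1, the limit equals $h_{\alpha}(\bar\t)$, completing part (ii).
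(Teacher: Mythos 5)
Your Steps~1 and~3 mirror the paper's proof exactly: derive the block-wise proximal descent inequalities, chain them into the sufficient-decrease estimate
\[
h_\alpha(\t^{(k+1)}) + \tfrac{\bar\gamma}{2}\|\t^{(k+1)} - \t^{(k)}\|^2 \le h_\alpha(\t^{(k)}),
\]
observe that $\{h_\alpha(\t^{(k)})\}$ is monotone and bounded on the compact product of spheres and hence convergent, and then pass to the limit in the per-block first-order optimality systems to identify any cluster point $\bar\t$ as a KKT point of (\ref{e7}), using $\|\t^{(k+1)}-\t^{(k)}\|\to 0$ to equate the ``older'' and ``newer'' block arguments in the limit and the normalization $\|\bar{\x}\|=\cdots=\|\bar{\z}\|=1$ to pin down the multipliers.

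Your Step~2 observation is sharp, and it in fact exposes a weakness in the paper's own proof. The paper asserts that $\sum_{k\ge 1}\|\t^{(k+1)} - \t^{(k)}\| < +\infty$ follows ``from (\ref{e9})'', i.e., from the descent inequality alone. As you correctly note, that inequality only yields $\sum_{k\ge 1}\|\t^{(k+1)} - \t^{(k)}\|^2 < +\infty$, and square-summability does not imply summability. The finite-length ($\ell^1$) property genuinely requires the Kurdyka--{\L}ojasiewicz machinery (the relative-error bound of Proposition~\ref{prop5} combined with Lemma~\ref{lema3}), which the paper only develops \emph{after} this theorem in order to prove Theorem~\ref{thm3}; the paper cannot legitimately invoke it inside the proof of Theorem~\ref{thm2} as written. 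Most plausibly the theorem statement and the corresponding part of Lemma~\ref{lema3} carry a typo (a missing exponent $2$), and only $\ell^2$-summability is intended at this stage: the proof of part~(ii) uses only $\|\t^{(k+1)} - \t^{(k)}\|\to 0$, which $\ell^2$-summability already supplies. In short, your proposal matches the paper's intended argument, and the gap you flag is a real gap in the stated form of the theorem rather than a shortcoming of your write-up.
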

\proof
(i) By Algorithm \ref{alg32}, it follows that
$$
\begin{aligned}
&h_{\alpha}(\t^{(k+1)})+\frac{\gamma_d}{2}\|\z^{(k+1)}-\z^{(k)}\|^2\leq h_{\alpha}(\x^{(k+1)},\cdots,\u^{(k+1)},\z^{(k)}),\\
&h_{\alpha}(\x^{(k+1)},\cdots,\u^{(k+1)},\z^{(k)})+\frac{\gamma_{d-1}}{2}\|\u^{(k+1)}-\u^{(k)}\|^2\leq h_{\alpha}(\x^{(k+1)},\cdots,\u^{(k)},\z^{(k)}),\\
&~~~~~~~~~~~~~~~~~~~~~~~\vdots~~~~~~~~~~~\vdots~~~~~~~~~~~~\vdots\\
&h_{\alpha}(\x^{(k+1)},\y^{(k)}\cdots,\u^{(k)},\z^{(k)})+\frac{\gamma_1}{2}\|\x^{(k+1)}-\x^{(k)}\|^2\leq h_{\alpha}(\t^{(k)}),
\end{aligned}
$$
which implies
\begin{equation}\label{e9}
h_{\alpha}(\t^{(k+1)})+\frac{\bar{\gamma}}{2}\|\t^{(k+1)}-\t^{(k)}\|^2\leq h_{\alpha}(\t^{(k)}).
\end{equation}
Therefore, the sequence $\{h_{\alpha}(\t^{(k)})\}$ is nonincreasing. Since $h_{\alpha}(\t)$ is bounded on the compact set $S$, we know that $\{h_{\alpha}(\t^{(k)})\}$ is convergent, and the sequence $\{\t^{(k)}\}$ satisfies $\sum_{k=1}^{+\infty}\|\t^{(k+1)}-\t^{(k)}\|<+\infty$ from (\ref{e9}). The conclusion (i) follows.

\noindent(ii) For the sequence $\{\t^{(k)}\}$, it has cluster points by bounded conditions. Without loss of generality, suppose that $\{\t^{(k_j)}\}$ is a subsequence of  $\{\t^{(k)}\}$ with cluster point $\bar{\t}=(\bar{\x}, \bar{\y},\cdots,\bar{\u},\bar{\z})$. By the proof of (i), it's clear that $\lim_{k\rightarrow+\infty}h_{\alpha}(\t^{(k)})=h_{\alpha}(\bar{\t})$. On the other hand, from (i) again, it holds that
$$
\lim_{k\rightarrow+\infty}\|\t^{(k+1)}-\t^{(k)}\|=0.
$$
Then, we have that $\lim_{j\rightarrow+\infty}\t^{(k_j+1)}=\bar{\t}.$ Considering the KKT system of the subproblems in Algorithm \ref{alg32}, there are lagrange multipliers $\lambda_1,\ldots,\lambda_d\in\R$ satisfying
$$
\left\{
\begin{aligned}
&\nabla_{\x} h_{\alpha}(\x^{(k_j+1)},\y^{(k_j)},\cdots,\z^{(k_j)})+\gamma_1(\x^{(k_j+1)}-\x^{(k_j)})-\lambda_1\x^{(k_j+1)}=\0,\\
&\nabla_{\y} h_{\alpha}(\x^{(k_j+1)},\y^{(k_j+1)},\cdots,\z^{(k_j)})+\gamma_2(\y^{(k_j+1)}-\y^{(k_j)})-\lambda_2\y^{(k_j+1)}=\0,\\
&~~~~~~~~~~~~~~~~~~~~~~~\vdots~~~~~~~~~~~\vdots~~~~~~~~~~~~\vdots\\
&\nabla_{\z} h_{\alpha}(\x^{(k_j+1)},\y^{(k_j+1)},\cdots,\z^{(k_j+1)})+\gamma_d(\z^{(k_j+1)}-\z^{(k_j)})-\lambda_d\z^{(k_j+1)}=\0.\\
\end{aligned}
\right.
$$
Let $j\rightarrow+\infty$, and by the continuity of $h_{\alpha}(\t)$, it follows that
\begin{equation}\label{e10}
\left\{
\begin{aligned}
&\nabla_{\x} h_{\alpha}(\bar{\x},\bar{\y},\cdots,\bar{\z})-\lambda_1\bar{\x}=\0,\\
&\nabla_{\y} h_{\alpha}(\bar{\x},\bar{\y},\cdots,\bar{\z})-\lambda_2\bar{\y}=\0, \\
&~~~~~~~~~~~~~~\vdots~~~~~~~~~~~\vdots~~~~~~~~~\\
&\nabla_{\z} h_{\alpha}(\bar{\x},\bar{\y},\cdots,\bar{\z})-\lambda_d\bar{\z}=\0.\\
\end{aligned}
\right.
\end{equation}
By the fact that $\bar{\x},\bar{\y},\cdots,\bar{\z}\in S$, we obtain that
$$
\lambda_1=\lambda_2=\cdots=\lambda_d=h_{\alpha}(\bar{\x},\bar{\y},\cdots,\bar{\z})=h_{\alpha}(\bar{\t}).
$$
Combining this with (\ref{e10}), it holds that $\nabla h_{\alpha}(\bar{\t})-h_{\alpha}(\bar{\t})\bar{\t}=\0,$
and the desired result holds.
\qed

\begin{remark} As a side note, we present an interesting observation that we propose as a conjecture. While it is not directly related to the convergence theories discussed in this paper, we believe this result could offer valuable insights into the relationship between homogeneous polynomials and their corresponding multilinear functions.
Let \( h(\x) \) be a homogeneous polynomial of even degree \( d \), with the corresponding multilinear function denoted as \( h(\x, \y, \cdots, \u, \z) \). If \( h(\x) \) is a concave function, then wthen we conjecture that the following inequality holds
\[
\min\{h(\x), h(\y), \cdots, h(\u), h(\z)\} \leq h(\x, \y, \cdots, \u, \z),
\]
for \( \|\x\| = \|\y\| = \dots = \|\z\| = 1 \).
For the case \( d = 2 \), where \( h_{\alpha}(\x) = \x^\top A\x \) and \( A \) is a matrix, the conjecture clearly holds. Using the concavity property, for any \( \x, \y \in \R^n \),
$
(\x - \y)^\top A (\x - \y) \leq 0,
$
which implies that
$
2\min\{\x^\top A\x, \y^\top A\y\} \leq \x^\top A\x + \y^\top A\y \leq 2\x^\top A\y.
$
For \( d = 4 \), a similar result has been proven in \cite{Chen2022}. However, for the case of \( d > 4 \), this remains an open question to the best of our knowledge, and thus we state this as a conjecture.
\label{remark conjecture}
\end{remark}

To establish global sequence convergence, numerous classical results from the literature have been employed to guarantee it for descent algorithms, including proximal algorithms, forward-backward splitting algorithms, regularized Gauss-Seidel methods, and others \cite{Hedy2013,Hedy2010}. The Kurdyka-{\L}ojasiewicz (KL) property (defined in Appendix \ref{appendix KL prop}) plays a pivotal role in analyzing global sequential convergence.  Below, we review a framework for proving global sequential convergence.

\begin{lemma}\label{lema3}{\rm\cite{Hedy2013}}
Let $\varphi: \R^n \rightarrow \R \cup \{+ \infty\}$ be a proper lower semicontinuous function\footnote{More definitions can be found in Appendix \ref{appendix KL prop}}. Consider a sequence $\{\x^{(k)}\}_{k>0}$ with $\x^{(k)} \in \R^n$ satisfying the following three conditions:

\noindent{\rm(i)}(Sufficient decrease condition) There exists $a>0$ such that
$$
\varphi(\x^{(k+1)})+a\|\x^{(k+1)}-\x^{(k)}\|^2\leq\varphi(\x^{(k)})
$$
holds for any $k\in\mathbb{N}$.

\noindent{\rm(ii)}(Relative error condition) There exist $b>0$ and $\omega^{(k+1)}\in\partial\varphi(\x^{(k+1)})$ (the subdifferential
 of $\varphi$ evaluated at $\x^{(k+1)}$), such that
$$
\|\omega^{(k+1)}\|\leq b\|\x^{(k+1)}-\x^{(k)}\|
$$
holds for any $k\in\mathbb{N}$.

\noindent{\rm(iii)} (Continuity condition) There exist subsequence $\{\x^{(k_j)}\}$ and $\x^*$ such that
$$
\x^{(k_j)}\rightarrow\x^*~~{\rm and}~~\varphi(\x^{(k_j)}) \rightarrow \varphi(\x^*),~~{\rm as}~j\rightarrow+\infty
$$
If $\varphi$ satisfies the KL property at $\x^*$, then  $\0\in\partial\varphi(\x^*)$, and $$\sum_{k=1}^{+\infty}\|\x^{(k+1)}-\x^{(k)}\|^2<+\infty,~~\lim_{k\rightarrow+\infty}\x^{(k)}=\x^*.$$
\end{lemma}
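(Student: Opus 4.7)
My plan is to follow the classical Attouch--Bolte--Svaiter template: combine conditions (i)--(iii) with the KL inequality at $\x^*$ to derive a telescoping bound on the successive displacements, and thereby Cauchy summability. First, (i) forces $\{\varphi(\x^{(k)})\}$ to be nonincreasing, and combined with (iii) along the subsequence $\{\x^{(k_j)}\}$ we obtain $\varphi(\x^{(k)})\downarrow\varphi(\x^*)$ for the whole sequence. If $\varphi(\x^{(k_0)})=\varphi(\x^*)$ at some index, then (i) pins $\x^{(k)}=\x^{(k_0)}$ for all $k\ge k_0$ and the conclusion is immediate; otherwise $\varphi(\x^{(k)})>\varphi(\x^*)$ at every $k$, which is the regime in which the KL inequality applies.

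Next, the KL property at $\x^*$ supplies constants $\eta>0$, a neighborhood $U$ of $\x^*$, and a concave desingularizing function $\psi$ with $\psi(0)=0$ and $\psi'>0$ such that
\[
\psi'\bigl(\varphi(\x)-\varphi(\x^*)\bigr)\cdot\mathrm{dist}\bigl(\0,\partial\varphi(\x)\bigr)\ge 1
\]
whenever $\x\in U$ and $\varphi(\x^*)<\varphi(\x)<\varphi(\x^*)+\eta$. Setting $\Delta_k:=\psi(\varphi(\x^{(k)})-\varphi(\x^*))$, concavity of $\psi$ yields $\Delta_k-\Delta_{k+1}\ge \psi'(\varphi(\x^{(k)})-\varphi(\x^*))\,(\varphi(\x^{(k)})-\varphi(\x^{(k+1)}))$. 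Bounding $\mathrm{dist}(\0,\partial\varphi(\x^{(k)}))\le b\|\x^{(k)}-\x^{(k-1)}\|$ via (ii) to control $\psi'$ from below through KL, and $\varphi(\x^{(k)})-\varphi(\x^{(k+1)})\ge a\|\x^{(k+1)}-\x^{(k)}\|^2$ via (i), one arrives (after a Young inequality applied to $\|\x^{(k+1)}-\x^{(k)}\|\,\|\x^{(k)}-\x^{(k-1)}\|$) at the central estimate
\[
2\|\x^{(k+1)}-\x^{(k)}\|\;\le\;\|\x^{(k)}-\x^{(k-1)}\|+\tfrac{b}{a}\bigl(\Delta_k-\Delta_{k+1}\bigr).
\]

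The main obstacle is ensuring that $\x^{(k)}$ never escapes the KL neighborhood $U$, because only then is the central estimate legitimate at every index. I would dispatch this by induction: since $\x^{(k_j)}\to\x^*$ and $\Delta_{k_j}\to 0$, I pick an index $k_0$ in the subsequence so that $\x^{(k_0)}$ lies deep inside $U$ and both $\Delta_{k_0}$ and $\|\x^{(k_0)}-\x^*\|$ are smaller than any prescribed tolerance. Summing the central estimate from $k_0$ to $k$ gives $\sum_{j=k_0}^{k}\|\x^{(j+1)}-\x^{(j)}\|\le \|\x^{(k_0)}-\x^{(k_0-1)}\|+\tfrac{b}{a}\Delta_{k_0}$, which by the triangle inequality keeps $\x^{(k)}$ inside $U$ and permits the induction to proceed.

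Telescoping the central estimate yields $\sum_{k}\|\x^{(k+1)}-\x^{(k)}\|<+\infty$, which is strictly stronger than the claimed $\ell^{2}$-summability. Hence $\{\x^{(k)}\}$ is Cauchy, and its limit must coincide with the cluster point $\x^*$. Finally, (ii) supplies selections $\omega^{(k+1)}\in\partial\varphi(\x^{(k+1)})$ with $\|\omega^{(k+1)}\|\le b\|\x^{(k+1)}-\x^{(k)}\|\to 0$; combined with $\x^{(k)}\to\x^*$ and $\varphi(\x^{(k)})\to\varphi(\x^*)$, the closedness of the graph of $\partial\varphi$ at $\x^*$ delivers $\0\in\partial\varphi(\x^*)$, completing the argument.
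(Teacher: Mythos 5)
The paper does not prove this lemma; it is quoted verbatim from Attouch--Bolte--Svaiter \cite{Hedy2013} and used as a black box. Your sketch faithfully reproduces the standard ABS argument (Theorem~2.9 of that reference): monotonicity of $\varphi(\x^{(k)})$ from (i) plus the subsequential limit from (iii) pin the whole value sequence to $\varphi(\x^*)$; the KL inequality and concavity of $\psi$ convert the sufficient-decrease and relative-error bounds into the three-term recursion $2\|\x^{(k+1)}-\x^{(k)}\|\le\|\x^{(k)}-\x^{(k-1)}\|+\tfrac{b}{a}(\Delta_k-\Delta_{k+1})$; the induction keeps the iterates inside the KL neighborhood $U$; and telescoping gives $\ell^1$-summability of the steps, hence Cauchy, hence convergence to $\x^*$; finally the outer-semicontinuity of the limiting subdifferential delivers $\0\in\partial\varphi(\x^*)$. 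You also correctly note that the argument yields the strictly stronger conclusion $\sum_k\|\x^{(k+1)}-\x^{(k)}\|<\infty$ rather than merely the $\ell^2$-summability stated in the lemma, which is indeed what the cited theorem gives.

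One small point worth tightening in the induction base case: besides taking $k_0$ in the subsequence with $\|\x^{(k_0)}-\x^*\|$ and $\Delta_{k_0}$ small, you must also ensure $\|\x^{(k_0)}-\x^{(k_0-1)}\|$ enters the budget that keeps later iterates inside $U$, since it appears on the right-hand side of the telescoped estimate. This is easy to arrange because (i) gives $\|\x^{(k_0)}-\x^{(k_0-1)}\|^2\le\frac{1}{a}\bigl(\varphi(\x^{(k_0-1)})-\varphi(\x^{(k_0)})\bigr)\to 0$ along the (already convergent) value sequence, but it should be said explicitly rather than absorbed silently. With that clarification the proof is complete and matches the reference.
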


By Lemma \ref{lema3}, to prove global convergence, we only need to verify the corresponding conditions (i)--(iii). To move on, denote the indicator function $\iota_{S}(\t)$ on $S\times S\cdots\times S$ such that
$$
\iota_{S}(\t)=\left\{
\begin{array}{ll}
0, &~~{\rm if}~\t\in S\times S\cdots\times S,\\
+\infty, &~~{\rm otherwise}.\\
\end{array}
\right.
$$
We now reformulate the multilinear programming (\ref{e7}) as an equivalent unconstraint programming:
\begin{equation}\label{e11}
\min_{\t\in S\times S\times\cdots\times S}h_{\alpha}(\t)=\min_{\t\in \R^n\times\R^n\times\cdots\times\R^n}h_{\alpha}(\t)+\iota_{S}(\t).
\end{equation}

\begin{remark}\label{rek3} Let $\varphi(\t)=h_{\alpha}(\t)+\iota_{S}(\t)$. Then, $\varphi(\t)$ holds the KL property automatically. From Theorem \ref{thm2}, we know that
the sufficient decrease condition and continuous condition are satisfied. Note that for all $\t\in S\times\cdots\times S$,
it holds that
$$
\partial\varphi(\t)=\partial\left(h_{\alpha}(\t)+\iota_{S}(\t)\right)=\nabla h_{\alpha}(\t)+\partial\iota_{S}(\t),
$$
and $\partial\iota_{S}(\t)=\{\lambda \t : \lambda\in\R\}$, where $\partial$ means the Fr\'{e}chet subdifferential \cite{ZL2022}. Therefore, it holds that $\omega^{(k+1)}=\nabla h_{\alpha}(\t^{(k+1)})-h_{\alpha}(\bar{\t})\t^{(k+1)}\in\partial\varphi(\t)$.
\end{remark}

By Lemma \ref{lema3} and Remark \ref{rek3} verify the conditions (i)--(iii) and we are ready to establish global sequence convergence.

\begin{proposition}\label{prop5} Let $\lambda^*=h_{\alpha}(\bar{\t})$, which is defined as in Theorem \ref{thm2}. Then, for all iterations $k\in\mathbb{N}$, there is an $M>0$ such that
$$
\|\omega^{(k+1)}\|=\|\nabla h_{\alpha}(\t^{(k+1)})-\lambda^*\t^{(k+1)}\|\leq M\|\t^{(k+1)}-\t^{(k)}\|.
$$
\end{proposition}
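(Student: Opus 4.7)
The plan is to extract a Lagrange multiplier from the KKT conditions of each block subproblem in Algorithm~\ref{alg32}, decompose $\omega^{(k+1)}$ block-wise via a telescoping identity, and bound each piece by $\|\t^{(k+1)}-\t^{(k)}\|$ up to a constant. For each block index $i\in\{1,\ldots,d\}$, the first-order optimality condition of the $i$-th subproblem yields a multiplier $\lambda_i^{(k+1)}\in\R$ satisfying
\[
\nabla_{\x_i}h_\alpha(\t^{(k+1,i)}) + \gamma_i(\x_i^{(k+1)}-\x_i^{(k)}) - \lambda_i^{(k+1)}\x_i^{(k+1)}=\0,
\]
where $\t^{(k+1,i)}:=(\x_1^{(k+1)},\ldots,\x_i^{(k+1)},\x_{i+1}^{(k)},\ldots,\x_d^{(k)})$ is the partially updated iterate. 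Pairing this identity with $\x_i^{(k+1)}$, using $\|\x_i^{(k+1)}\|=1$, the multilinear identity $\langle\nabla_{\x_i}h_\alpha(\t),\x_i\rangle=h_\alpha(\t)$, and the spherical identity $1-\langle\x_i^{(k+1)},\x_i^{(k)}\rangle=\tfrac{1}{2}\|\x_i^{(k+1)}-\x_i^{(k)}\|^2$, I would obtain the closed form $\lambda_i^{(k+1)}=h_\alpha(\t^{(k+1,i)})+\tfrac{\gamma_i}{2}\|\x_i^{(k+1)}-\x_i^{(k)}\|^2$.

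Substituting the KKT identity into the $i$-th block of $\omega^{(k+1)}$ produces the decomposition
\[
\nabla_{\x_i}h_\alpha(\t^{(k+1)}) - \lambda^*\x_i^{(k+1)} = \bigl[\nabla_{\x_i}h_\alpha(\t^{(k+1)})-\nabla_{\x_i}h_\alpha(\t^{(k+1,i)})\bigr]-\gamma_i(\x_i^{(k+1)}-\x_i^{(k)})+(\lambda_i^{(k+1)}-\lambda^*)\x_i^{(k+1)}.
\]
The first bracket is controlled by the Lipschitz continuity of $\nabla h_\alpha$ on the compact product of unit spheres (valid since $h_\alpha$ is polynomial), together with the fact that $\t^{(k+1,i)}$ and $\t^{(k+1)}$ differ only in blocks indexed by $j>i$, yielding a bound of $L\|\t^{(k+1)}-\t^{(k)}\|$. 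The proximal piece is trivially bounded by $\gamma_i\|\t^{(k+1)}-\t^{(k)}\|$.

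The main obstacle is the multiplier mismatch $(\lambda_i^{(k+1)}-\lambda^*)\x_i^{(k+1)}$, since $\lambda^*=h_\alpha(\bar\t)$ is a fixed scalar while $\lambda_i^{(k+1)}$ varies with both block and iteration. Using the closed form above I would split
\[
|\lambda_i^{(k+1)}-\lambda^*|\leq |h_\alpha(\t^{(k+1,i)})-h_\alpha(\bar\t)|+\tfrac{\gamma_i}{2}\|\x_i^{(k+1)}-\x_i^{(k)}\|^2,
\]
reducing the quadratic term to a linear one via $\|\x_i^{(k+1)}-\x_i^{(k)}\|\leq 2$ on the unit sphere, which gives $\|\x_i^{(k+1)}-\x_i^{(k)}\|^2\leq 2\|\t^{(k+1)}-\t^{(k)}\|$. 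The function-value gap $|h_\alpha(\t^{(k+1,i)})-\lambda^*|$ is the delicate piece: I would combine the Lipschitz continuity of $h_\alpha$ on the compact manifold, the monotone convergence $h_\alpha(\t^{(k)})\downarrow\lambda^*$ from Theorem~\ref{thm2}(i), and the sufficient decrease inequality \eqref{e9} to express it as a constant multiple of $\|\t^{(k+1)}-\t^{(k)}\|$, using that the consecutive iterate differences uniformly control how far the partial updates deviate from the cluster point on the compact feasible set. Summing the block-wise bounds and taking $M=\sqrt{d}\,\max_i(L+2\gamma_i+M'_i)$ for appropriate constants $M'_i$ absorbing the function-value constant yields $\|\omega^{(k+1)}\|\leq M\|\t^{(k+1)}-\t^{(k)}\|$, which together with Remark~\ref{rek3} verifies the relative-error condition~(ii) of Lemma~\ref{lema3} and sets up the KL-based global convergence argument.
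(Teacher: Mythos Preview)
Your overall architecture---KKT conditions at each block subproblem, a telescoping decomposition of the block gradients, and Lipschitz control of the gradient differences on the compact product of spheres---matches the paper's proof closely. The paper writes directly (citing the KKT system and the proof of Theorem~\ref{thm2}) the inequalities
\[
\|\nabla_{\x_i} h_{\alpha}(\t^{(k+1,i)})-\lambda^*\x_i^{(k+1)}\|\leq \gamma_i\|\x_i^{(k+1)}-\x_i^{(k)}\|,
\]
and then telescopes $\nabla_{\x_i}h_\alpha(\t^{(k+1)})-\nabla_{\x_i}h_\alpha(\t^{(k+1,i)})$ block by block, bounding each difference by a constant times a single coordinate increment via boundedness of the multilinear partial maps on $S$, and finally sums the resulting constants to obtain $M$.

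Where your proposal diverges---and runs into a genuine gap---is the treatment of the multiplier mismatch $\lambda_i^{(k+1)}-\lambda^*$. Your reduction leaves the residual $|h_\alpha(\t^{(k+1,i)})-\lambda^*|$, and the claimed bound of this quantity by $C\|\t^{(k+1)}-\t^{(k)}\|$ does not follow from the ingredients you list. Lipschitz continuity controls $|h_\alpha(\t^{(k+1,i)})-h_\alpha(\bar\t)|$ only by $\|\t^{(k+1,i)}-\bar\t\|$, which is not dominated by a single consecutive step; monotone convergence $h_\alpha(\t^{(k)})\downarrow\lambda^*$ gives no rate; and the sufficient-decrease inequality~\eqref{e9} yields a \emph{lower} bound on the one-step function decrease, not an upper bound on the remaining gap to $\lambda^*$. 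For sublinearly convergent sequences one can have $h_\alpha(\t^{(k)})-\lambda^*$ decaying strictly slower than $\|\t^{(k+1)}-\t^{(k)}\|$, so no uniform $C$ exists without additional structure. The paper does not attempt such an estimate: it simply asserts the displayed inequality with $\lambda^*$ already in place, effectively sidestepping precisely the issue you identified. If you want an argument that is airtight for the purposes of Lemma~\ref{lema3}(ii), the clean fix is to define $\omega^{(k+1)}$ using the iteration-dependent block multipliers $-\lambda_i^{(k+1)}$ rather than the fixed $-\lambda^*$; this vector still lies in $\partial\varphi(\t^{(k+1)})$ (each block of $\partial\iota_S$ is the full line through the iterate) and the mismatch term disappears entirely, leaving only the Lipschitz and proximal pieces you already bounded.
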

\proof By Algorithm \ref{alg32} and the proof of Theorem \ref{thm2}, it's clear that
\begin{equation}\label{e12}
\left\{
\begin{aligned}
&\|\nabla_{\x} h_{\alpha}(\x^{(k+1)},\y^{(k)},\cdots,\z^{(k)})-\lambda^*\x^{(k+1)}\|\leq \gamma_1\|\x^{(k+1)}-\x^{(k)}\|,\\
&\|\nabla_{\y}h_{\alpha}(\x^{(k+1)},\y^{(k+1)},\cdots,\z^{(k)})-\lambda^*\y^{(k+1)}\|\leq \gamma_2\|\y^{(k+1)}-\y^{(k)}\|,\\
&~~~~~~~~~~~~~~~~~~~~~~~\vdots~~~~~~~~~~~\vdots~~~~~~~~~~~~\vdots\\
&\|\nabla_{\z} h_{\alpha}(\x^{(k+1)},\y^{(k+1)},\cdots,\z^{(k+1)})-\lambda^*\z^{(k+1)}\|\leq\gamma_d\|\z^{(k+1)}-\z^{(k)}\|,\\
\end{aligned}
\right.
\end{equation}
where $\gamma_i, i\in[d]$ are nonnegative numbers in Algorithm \ref{alg32}. By the multilinear structure of $h_{\alpha}(\t)$ and the triangle inequality, we obtain that
$$
\begin{aligned}
&\|\nabla_{\x} h_{\alpha}(\t^{(k+1)})-\lambda^*\x^{(k+1)}\| \\
\leq&\|\nabla_{\x}  h_{\alpha}(\x^{(k+1)},\cdots,\z^{(k+1)})-\nabla_{\x}  h_{\alpha}(\x^{(k+1)},\cdots,\u^{(k+1)},\z^{(k)})\| \\
&+\|\nabla_{\x} h_{\alpha}(\x^{(k+1)},\cdots,\u^{(k+1)},\z^{(k)})-\nabla_{\x}  h_{\alpha}(\x^{(k+1)},\cdots,\u^{(k)},\z^{(k)})\|\\
&+\cdots \cdots\cdots \cdots\cdots \cdots\cdots \cdots\cdots \cdots\\
&+\|\nabla_{\x} h_{\alpha}(\x^{(k+1)},\y^{(k+1)},\cdots,\z^{(k)})-\nabla_{\x}  h_{\alpha}(\x^{(k+1)},\y^{(k)},\cdots,\u^{(k)},\z^{(k)})\| \\
&+\|\nabla_{\x}  h_{\alpha}(\x^{(k+1)},\y^{(k)},\cdots,\u^{(k)},\z^{(k)})-\lambda^*\x^{(k+1)}\|\\
\leq &m^1_1\|\z^{(k+1)}-\z^{(k)}\|+m^1_2\|\u^{(k+1)}-\u^{(k)}\|+\cdots\cdots\\
&+m^1_{d-1}\|\y^{(k+1)}-\y^{(k)}\|+\gamma_1\|\x^{(k+1)}-\x^{(k)}\|\leq m_1\|\t^{(k+1)}-\t^{(k)}\|,
\end{aligned}
$$
where $m_1,m^1_1,\cdots,m^1_{d-1}$ are positive constants. Take $m^1_1$ as an example. Because
\begin{align*}
\nabla_{\x}  h_{\alpha}(\x^{(k+1)},\cdots,\z^{(k+1)})-\nabla_{\x}  h_{\alpha}(\x^{(k+1)},\cdots,\u^{(k+1)},\z^{(k)})=
\\\A_{h}\y^{(k+1)}\cdots\u^{(k+1)}(\z^{(k+1)}-\z^{(k)})
\end{align*}
is a continuous function with all variables defined in compact set $S$, it is bounded. Similar to the proof above, there are positive constants $m_i, i\in[d]$ satisfying that, for each block of $\t^{(k+1)}$,
$$
\left\{
\begin{aligned}
&\|\nabla_{\x} h_{\alpha}(\t^{(k+1)})-\lambda^*\x^{(k+1)}\|\leq m_1\|\t^{(k+1)}-\t^{(k)}\|, \\
&\|\nabla_{\y} h_{\alpha}(\t^{(k+1)})-\lambda^*\y^{(k+1)}\|\leq m_2\|\t^{(k+1)}-\t^{(k)}\|, \\
&~~~~~~~~~~~~~~~~~~~~~~~\vdots~~~~~~~~~~~\vdots~~~~~~~~~~~~\vdots\\
&\|\nabla_{\z} h_{\alpha}(\t^{(k+1)})-\lambda^*\z^{(k+1)}\|\leq m_d\|\t^{(k+1)}-\t^{(k)}\|. \\
\end{aligned}
\right.
$$
Take $M=m_1+m_2+\cdots+m_d$.  Combining this with the definition of $\omega^{(k+1)}$, it follows that
$$
\begin{aligned}
\|\omega^{(k+1)}\|=&\|\nabla h_{\alpha}(\t^{(k+1)})-\lambda^*\t^{(k+1)}\|\\
\leq&\|\nabla_{\x} h_{\alpha}(\t^{(k+1)})-\lambda^*\x^{(k+1)}\|+\|\nabla_{\y} h_{\alpha}(\t^{(k+1)})-\lambda^*\y^{(k+1)}\| \\
&+\cdots\cdots+\|\nabla_{\z} h_{\alpha}(\t^{(k+1)})-\lambda^*\z^{(k+1)}\|\\
\leq& M\|\t^{(k+1)}-\t^{(k)}\|,
\end{aligned}
$$
and the desired result follows.
\qed

By Theorem \ref{thm2}, Lemma \ref{lema3} and Proposition \ref{prop5}, we have the following global sequential convergence.

\begin{theorem}\label{thm3}{\rm(Global sequence convergence)} Assume that the sequence $\{\t^{(k)}\}$ is an infinite sequence generated by Algorithm \ref{alg32}. Then, it converges globally to a KKT point $\bar{\t}$ of (\ref{e7}).
\end{theorem}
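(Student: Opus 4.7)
The plan is to apply the KL-based framework of Lemma \ref{lema3} to the indicator-reformulated objective $\varphi(\t)=h_{\alpha}(\t)+\iota_{S}(\t)$ from Remark \ref{rek3}. Since the analytical machinery for the three hypotheses of that lemma is already assembled elsewhere in Section \ref{Sec3}, the proof reduces to a short orchestration: verify (i)--(iii) of Lemma \ref{lema3} for $\varphi$, confirm that $\varphi$ has the KL property at the cluster point $\bar{\t}$ supplied by Theorem \ref{thm2}, and then read off global convergence and the KKT conclusion from the statement of Lemma \ref{lema3}.

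First I would observe that the sufficient decrease condition (i) is inherited directly from inequality \eqref{e9} with $a=\bar{\gamma}/2$: because all iterates lie in $S\times\cdots\times S$, the indicator $\iota_{S}$ vanishes along the sequence and hence $\varphi(\t^{(k+1)})+\tfrac{\bar{\gamma}}{2}\|\t^{(k+1)}-\t^{(k)}\|^{2}\le\varphi(\t^{(k)})$. Next, the relative error condition (ii) is precisely Proposition \ref{prop5}: the element $\omega^{(k+1)}=\nabla h_{\alpha}(\t^{(k+1)})-\lambda^{*}\t^{(k+1)}$ belongs to $\partial\varphi(\t^{(k+1)})$ in view of the identity $\partial\iota_{S}(\t)=\{\lambda\t:\lambda\in\R\}$ recalled in Remark \ref{rek3}, and Proposition \ref{prop5} gives $\|\omega^{(k+1)}\|\le M\|\t^{(k+1)}-\t^{(k)}\|$. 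Finally, the continuity condition (iii) comes from Theorem \ref{thm2}(ii), which furnishes a subsequence $\t^{(k_{j})}\to\bar{\t}$ with $h_{\alpha}(\t^{(k_{j})})\to h_{\alpha}(\bar{\t})$; since $\iota_{S}$ is zero along this subsequence and at $\bar{\t}$, one also has $\varphi(\t^{(k_{j})})\to\varphi(\bar{\t})$.

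It then remains to argue that $\varphi$ satisfies the KL property at $\bar{\t}$. Because $h_{\alpha}$ is a polynomial in $\t$ and the unit sphere is a real algebraic set, $\iota_{S}$ is a semi-algebraic function, so $\varphi$ is semi-algebraic and proper lower semicontinuous; the general theory of \cite{Hedy2013,Hedy2010} then guarantees the desired KL property at every point of its domain, in particular at $\bar{\t}$. Applying Lemma \ref{lema3} yields both $\0\in\partial\varphi(\t^{*})$ and $\lim_{k\to\infty}\t^{(k)}=\t^{*}$. Since $\0\in\nabla h_{\alpha}(\t^{*})+\partial\iota_{S}(\t^{*})$ with $\partial\iota_{S}(\t^{*})=\{\lambda\t^{*}\}$, this inclusion is exactly the KKT system for \eqref{e7}, identifying $\t^{*}=\bar{\t}$ as a KKT point.

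The main obstacle I anticipate is a small bookkeeping issue in reconciling the ``fixed'' multiplier $\lambda^{*}=h_{\alpha}(\bar{\t})$ used in Proposition \ref{prop5} with the iteration-dependent multipliers $\lambda_{1},\ldots,\lambda_{d}$ produced by the block subproblems of Algorithm \ref{alg32}; the gap between them must be absorbed into the constants $m_{i}$ via continuity of $\nabla h_{\alpha}$ on the compact product of spheres. This is already handled inside the proof of Proposition \ref{prop5}, so once that proposition is in hand the rest of the argument is a direct invocation of Lemma \ref{lema3} and no further technicalities arise.
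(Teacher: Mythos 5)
Your proposal follows exactly the paper's intended route: the paper dispatches Theorem~\ref{thm3} as an immediate consequence of Theorem~\ref{thm2}, Lemma~\ref{lema3}, and Proposition~\ref{prop5} (with Remark~\ref{rek3} supplying the KL property of $\varphi$ and the form of $\partial\iota_S$), and your writeup is just the explicit verification of conditions (i)--(iii) plus the semi-algebraicity justification for the KL property, which is what the paper leaves implicit. The argument is correct and matches the paper's approach; your added remark that semi-algebraicity underlies the automatic KL property is a welcome clarification, not a deviation.
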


We now study the convergence rate of Algorithm \ref{alg32}.
First of all, we derive the KL exponent of the associated functions of $\varphi(\t)$.
To do this, we need the classical {\L}ojasiewicz gradient inequality for polynomials (See Theorem 4.2 of \cite{DK2005} for the detail).
\begin{lemma}\label{lema4}{\rm({\L}ojasiewicz gradient inequality)}
Let $f$ be a polynomial of $\R^n$ with degree $d\in\mathbb{N}$. Suppose that $f(\bar{\x})=0$. Then there exist constants $\epsilon, c>0$ such that, for all $\x\in\R^n$ with $\|\x-\bar{\x}\|\leq\epsilon$, we have
$$
\|\nabla f(\x)\|\geq c|f(\x)|^{1-\tau},~{\rm where}~\tau=R(n,d)^{-1},~R(n,d)=\left\{
\begin{array}{ll}
1,&{\rm if}~~ d=1,\\
d(3d-3)^{n-1},& {\rm if}~~ d\geq2.
\end{array}
\right.
$$
\end{lemma}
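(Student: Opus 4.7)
The plan is to combine the Curve Selection Lemma of semi-algebraic geometry with quantitative Bezout-type bounds on real-algebraic critical sets, so as to obtain both the gradient inequality and the explicit exponent $\tau = R(n,d)^{-1}$. The argument will proceed in two stages: a qualitative existence argument by contradiction, followed by a quantitative refinement that tracks the exponent through degree bounds on polar varieties.

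First, I would establish the qualitative {\L}ojasiewicz gradient inequality, namely the existence of $\epsilon, c > 0$ and \emph{some} $\tau \in (0,1]$ with $\|\nabla f(\x)\| \geq c |f(\x)|^{1-\tau}$ whenever $\|\x-\bar\x\| \leq \epsilon$. Arguing by contradiction, if no such constants existed then for every integer $N$ the semi-algebraic set
$$
A_N := \bigl\{\x \in \R^n \setminus \{\bar\x\} : N\,\|\nabla f(\x)\| < |f(\x)|^{1-1/N}\bigr\}
$$
would have $\bar\x$ in its closure. Applying the Curve Selection Lemma to $A_N$ produces a real-analytic arc $\gamma:[0,\delta)\to\R^n$ with $\gamma(0)=\bar\x$ and $\gamma(t)\in A_N$ for $t>0$. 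Substituting Puiseux expansions of $f\circ\gamma$ and $\nabla f\circ\gamma$ into the identity $\tfrac{d}{dt} f(\gamma(t)) = \langle \nabla f(\gamma(t)), \gamma'(t)\rangle$ and comparing leading orders in $t$ contradicts the inequality defining $A_N$ for $N$ sufficiently large, which yields the qualitative statement.

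Next, I would refine the argument to produce the \emph{explicit} exponent $\tau = 1/(d(3d-3)^{n-1})$, following D'Acunto--Kurdyka. The idea is to follow the gradient ``talweg'' — the locus on each level set $\{f=s\}$ where $|f|/\|\nabla f\|$ attains its minimum — and to integrate $ds/\|\nabla f\|$ along it. The talweg lies in a real-algebraic set cut out by polynomials of degree at most $3d-3$ (the polar variety defined by the vanishing of the appropriate $2\times 2$ minors of $(\nabla f,\, \x-\bar\x)$), so a Bezout-type estimate bounds the number of its connected components by $(3d-3)^{n-1}$. An additional factor of $d$ arises from controlling the range of values of $f$ along each branch, producing precisely the stated $R(n,d)$.

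The main obstacle I anticipate is exactly this quantitative refinement: promoting the soft existence result to the sharp bound $R(n,d) = d(3d-3)^{n-1}$ demands careful bookkeeping of the degrees of all polar varieties involved, together with Milnor--Thom-type counts of connected components. The base case $n=1$ is straightforward, since $f(\bar x)=0$ forces a factorization $f(x)=(x-\bar x)^k h(x)$ with $h(\bar x)\neq 0$ and $k\leq d$, and a direct calculation immediately gives $\tau = 1/d$. The inductive/recursive step in general dimension, with its dimension-by-dimension degree tracking along polar curves, is the technical heart of the argument in \cite{DK2005} and is the step I would need to follow most closely.
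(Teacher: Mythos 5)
The paper does not prove this lemma at all: it is imported verbatim from D'Acunto--Kurdyka, with an explicit pointer to Theorem~4.2 of \cite{DK2005}, and is used in the paper only as a black box to derive the KL exponent in Theorem~\ref{thm4}. So there is no in-paper argument to compare yours against; the relevant comparison is with the cited source, and your sketch is essentially a summary of that source's strategy (curve selection for the soft inequality, then the talweg/polar-curve construction with Bezout-type component counts for the explicit exponent $R(n,d)=d(3d-3)^{n-1}$).

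As a proof, however, your proposal is not self-contained where it matters. The qualitative step via the Curve Selection Lemma is fine but is not what the lemma is really about; the entire content is the explicit exponent, and that part you only gesture at and explicitly defer to \cite{DK2005}. One concrete inaccuracy in the gesture: the polar variety you name, cut out by the $2\times 2$ minors of $(\nabla f,\ \x-\bar\x)$, has defining polynomials of degree at most $(d-1)+1=d$, which cannot produce the $(3d-3)^{n-1}$ count; in D'Acunto--Kurdyka the talweg is the locus of minimizers of $\|\nabla f\|$ (equivalently $\|\nabla f\|^2$) on the level sets $\{f=s\}$, so the Lagrange condition involves the minors of $\bigl(\nabla f,\ \nabla\|\nabla f\|^2\bigr)$, whose degrees $d-1$ and $2d-3$ are what feed the $3d-3$-type degree bookkeeping, with the extra factor $d$ coming from the one-dimensional estimate of $f$ along each branch of the resulting polar curve. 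If your intent is to match the paper, a citation of \cite{DK2005} (as the paper itself does) suffices; if you intend a genuine proof, the recursive degree-tracking along polar curves must be carried out in full, and the polar variety must be the correct one.
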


By Lemma \ref{lema4}, the following theorem establishes the explicit KL exponent of the merit function.
\begin{theorem}\label{thm4} Let $\varphi(\t)=h_{\alpha}(\t)+\iota_{\Lambda}(\t)$ be defined as in (\ref{e11}), where
$\t=(\x^\top,\y^\top,\cdots,\u^\top$, $\z^\top)^\top\in \Lambda=S\times S\times\cdots\times S\subseteq\R^{dn}$. Assume that $d\geq2$. Then
$\varphi(\t)$ satisfies the KL property with exponent $1-\tau$ at $\bar{\t}=(\bar{\x}^\top,\bar{\y}^\top,\cdots,\bar{\u}^\top, \bar{\z}^\top)^\top$,
where $\tau=d^{-1}(3d-3)^{1-dn}$.
\end{theorem}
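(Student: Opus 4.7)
The plan is to reduce the KL property of $\varphi$ on $\Lambda$ to the unconstrained \L{}ojasiewicz gradient inequality of Lemma \ref{lema4}. First I would exploit Theorem \ref{thm2}: at the cluster point $\bar{\t}$, the block KKT multipliers all collapse to the common value $\lambda^{*}=h_{\alpha}(\bar{\t})$, which aggregated gives the single identity $\nabla h_{\alpha}(\bar{\t})=h_{\alpha}(\bar{\t})\,\bar{\t}$ in $\R^{dn}$. Using this I would introduce the auxiliary polynomial
$$p(\t)\;=\;h_{\alpha}(\t)-h_{\alpha}(\bar{\t})-\frac{h_{\alpha}(\bar{\t})}{2}\bigl(\|\t\|^{2}-d\bigr),$$
which satisfies $p(\bar{\t})=0$ and $\nabla p(\bar{\t})=\mathbf{0}$, and which, since $\|\t\|^{2}\equiv d$ on $\Lambda$, coincides with $\varphi(\t)-\varphi(\bar{\t})=h_{\alpha}(\t)-h_{\alpha}(\bar{\t})$ for every $\t\in\Lambda$. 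Because $h_{\alpha}$ is a degree-$d$ polynomial in the $dn$ coordinates of $\t$ and $d\geq 2$, so is $p$.

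Next I would apply Lemma \ref{lema4} to $p$ at $\bar{\t}$, which produces constants $\epsilon,c>0$ and $\tau=R(dn,d)^{-1}=d^{-1}(3d-3)^{1-dn}$ with
$$\|\nabla p(\t)\|\;\geq\;c\,|p(\t)|^{\,1-\tau}\qquad\text{whenever }\|\t-\bar{\t}\|\leq\epsilon.$$
Since $p=\varphi-\varphi(\bar{\t})$ on $\Lambda$, the right-hand side already equals the desired desingularising power $c\,|\varphi(\t)-\varphi(\bar{\t})|^{1-\tau}$, so what remains is a careful comparison of $\|\nabla p(\t)\|$ with $\mathrm{dist}(\mathbf{0},\partial\varphi(\t))$ on $\Lambda$.

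This comparison is the main (but manageable) obstacle. Its resolution rests on one observation: every summand in $h_{\alpha}$, including the regularisation term $-\alpha\langle\x,\y\rangle\cdots\langle\u,\z\rangle$, is multilinear, i.e.\ of degree one in each block. Euler's identity applied blockwise therefore yields $\langle\nabla_{i}h_{\alpha}(\t),\t_{i}\rangle=h_{\alpha}(\t)$ for all $\t\in\Lambda$ and every block index $i$, where $\t_{i}$ denotes the $i$-th block and $\nabla_{i}$ the corresponding partial gradient. Combined with Remark \ref{rek3}, the optimal block multipliers in $\partial\varphi(\t)=\nabla h_{\alpha}(\t)+N_{\Lambda}(\t)$ all equal $-h_{\alpha}(\t)$, so on $\Lambda$
$$\mathrm{dist}(\mathbf{0},\partial\varphi(\t))^{2}\;=\;\sum_{i=1}^{d}\bigl\|\nabla_{i}h_{\alpha}(\t)-h_{\alpha}(\t)\,\t_{i}\bigr\|^{2}\;=\;\|\nabla h_{\alpha}(\t)\|^{2}-d\,h_{\alpha}(\t)^{2}.$$
Expanding $\|\nabla p(\t)\|^{2}=\|\nabla h_{\alpha}(\t)-h_{\alpha}(\bar{\t})\,\t\|^{2}$ using $\langle\nabla h_{\alpha}(\t),\t\rangle=d\,h_{\alpha}(\t)$ and $\|\t\|^{2}=d$ then collapses to the clean identity
$$\|\nabla p(\t)\|^{2}\;=\;\mathrm{dist}(\mathbf{0},\partial\varphi(\t))^{2}+d\,p(\t)^{2}\qquad(\t\in\Lambda).$$

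Combining the \L{}ojasiewicz bound with this identity yields $\mathrm{dist}(\mathbf{0},\partial\varphi(\t))^{2}\geq c^{2}|p(\t)|^{2(1-\tau)}-d\,p(\t)^{2}$. Shrinking $\epsilon$ if necessary so that $d\,|p(\t)|^{2\tau}\leq c^{2}/2$ on $\Lambda\cap B(\bar{\t},\epsilon)$ — possible because $\tau>0$ and $p$ is continuous with $p(\bar{\t})=0$ — the right-hand side is at least $\tfrac{c^{2}}{2}|p(\t)|^{2(1-\tau)}$, so
$$\mathrm{dist}(\mathbf{0},\partial\varphi(\t))\;\geq\;\tfrac{c}{\sqrt{2}}\,|\varphi(\t)-\varphi(\bar{\t})|^{\,1-\tau},$$
which is precisely the KL property of $\varphi$ at $\bar{\t}$ with exponent $1-\tau$, where $\tau=d^{-1}(3d-3)^{1-dn}$, as claimed.
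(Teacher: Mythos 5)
Your proof is correct and, in fact, it takes a cleaner route than the paper's. The paper introduces the augmented polynomial
\[
f(\x,\y,\cdots,\z,\lambda)=h_{\alpha}(\t)+\tfrac{\lambda}{2}\sum_{i}\bigl(\|\t_i\|^2-1\bigr),
\]
treating $\lambda$ as an extra variable (giving a polynomial in $dn+1$ coordinates), and then substitutes $\lambda=-h_{\alpha}(\t)$ so that on $\Lambda$ the gradient of $f$ equals $\mathrm{dist}(\mathbf{0},\partial\varphi(\t))$ exactly. You instead freeze the multiplier at the single constant $h_{\alpha}(\bar{\t})$, obtaining a genuine degree-$d$ polynomial $p$ in $dn$ variables vanishing at $\bar{\t}$, and in exchange accept the honest relation $\|\nabla p\|^2=\mathrm{dist}(\mathbf{0},\partial\varphi)^2+d\,p^2$ rather than an equality, then absorb the $d\,p^2$ term by shrinking the neighbourhood so that $d\,|p|^{2\tau}\leq c^2/2$ (possible because $\tau>0$ and $p(\bar{\t})=0$).

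Your version is arguably more rigorous than the paper's. The paper's invocation of Lemma~\ref{lema4} to $f$ is slightly off: $f$ is a polynomial in $dn+1$ variables, so the lemma yields $\tau'=R(dn+1,d)^{-1}=d^{-1}(3d-3)^{-dn}$ rather than the stated $R(dn,d)^{-1}=d^{-1}(3d-3)^{1-dn}$; moreover, after substituting $\lambda=-h_{\alpha}(\t)$ (which depends on $\t$), $f$ ceases to be a polynomial of degree $d$ in $\t$. Your construction stays in $\R^{dn}$ with a bona fide degree-$d$ polynomial throughout and hence delivers the exponent $\tau=d^{-1}(3d-3)^{1-dn}$ claimed in the theorem without any such inconsistency. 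The small algebraic overhead (verifying the identity $\|\nabla p\|^2=\mathrm{dist}(\mathbf{0},\partial\varphi)^2+d\,p^2$ and the neighbourhood-shrinking step) is the price for this cleanliness, and you carry it out correctly using block-wise Euler identities and $\|\t\|^2\equiv d$ on $\Lambda$.
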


\proof To prove the statement, let $\bar{\t}\in \Lambda$, and let $\delta_1, \eta>0$ such that for any $\t$ satisfying $\|\t-\bar{\t}\|\leq \delta_1$, it follows that
$$
\varphi(\bar{\t})\leq \varphi(\t)\leq\varphi(\bar{\t})+\eta.
$$
On one hand, we can write
$$
\varphi(\t)=h_{\alpha}(\x,\y,\cdots,\u,\z)+\iota_{S}(\x)+\iota_{S}(\y)+\cdots+\iota_{S}(\z).
$$
By a direct computation, we know that
\begin{equation}\label{e13}
\left\{
\begin{aligned}
&\partial_{\x}\varphi(\t)=\{\nabla_{\x}h_{\alpha}(\t)+\lambda_1\x : \lambda_1\in\R\},\\
&\partial_{\y}\varphi(\t)=\{\nabla_{\y}h_{\alpha}(\t)+\lambda_2\y : \lambda_2\in\R\},\\
&~~~~~~~~~~~~~~~\vdots~~~~~~~~~~~~~~~\vdots\\
&\partial_{\u}\varphi(\t)=\{\nabla_{\u}h_{\alpha}(\t)+\lambda_{d-1}\u : \lambda_{d-1}\in\R\},\\
&\partial_{\z}\varphi(\t)=\{\nabla_{\z}h_{\alpha}(\t)+\lambda_d\z : \lambda_d\in\R\}.\\
\end{aligned}
\right.
\end{equation}
By (\ref{e13}), it implies that
\begin{equation}\label{e14}
\begin{aligned}
{\rm dist}(0, \partial\varphi(\t))^2
&=\inf_{\lambda_1,\cdots,\lambda_d\in\R}\|\partial_{\x}\varphi(\t)\|^2+\|\partial_{\y}\varphi(\t)\|^2+\cdots+\|\partial_{\z}\varphi(\t)\|^2,\\
&=\inf_{\lambda_1\in\R}(\lambda_1^2+2\lambda_1h_{\alpha}(\t)+\|\nabla_{\x}h_{\alpha}(\t)\|^2)\\
&~~+\inf_{\lambda_2\in\R}(\lambda_2^2+2\lambda_2h_{\alpha}(\t)+\|\nabla_{\y}h_{\alpha}(\t)\|^2)+\cdots\\
&~~+\inf_{\lambda_d\in\R}(\lambda_d^2+2\lambda_dh_{\alpha}(\t)+\|\nabla_{\z}h_{\alpha}(\t)\|^2)\\
&=-dh_{\alpha}(\t)^2+\|\nabla h_{\alpha}(\t)\|^2.
\end{aligned}
\end{equation}

On the other hand, we consider the following polynomial
$$
f(\x,\y,\cdots,\u,\z,\lambda)=h_{\alpha}(\t)+\frac{\lambda}{2}(\|\x\|^2-1)+\frac{\lambda}{2}(\|\y\|^2-1)
+\cdots+\frac{\lambda}{2}(\|\z\|^2-1),$$
where $\lambda=-h_{\alpha}(\t)=-h_{\alpha}(\x,\y,\cdots,\z)$. Denote
$$\hat{f}(\x,\y,\cdots,\z,\lambda)=f(\x,\y,\cdots,\z,\lambda)-f(\bar{\x},\bar{\y},\cdots,\bar{\z},\bar{\lambda}),$$
where $\bar{\lambda}=-h_{\alpha}(\bar{\x},\bar{\y},\cdots,\bar{\z})$.
Obviously that $\hat{f}(\t)$ is a polynomial defined in $\R^{nd}$ with degree $d$.
By Lemma \ref{lema4}, there exist $\delta'>0, c>0$
such that, for all $\|\t-\bar{\t}\|\leq\delta_2$, it follows
$$
\begin{aligned}
\|\nabla f(\x,\y,\cdots,\z,\lambda)\|&=\|\nabla \hat{f}(\x,\y,\cdots,\z,\lambda)\|\\
&\geq c|f(\x,\y,\cdots,\z,\lambda)-f(\bar{\x},\bar{\y},\cdots,\bar{\z},\bar{\lambda})|^{1-\tau},
\end{aligned}
$$
where $\tau=d^{-1}(3d-3)^{1-dn}$. Note that for any $\t\in\Lambda, \lambda\in\R$, we have
$$
\left\{
\begin{aligned}
&\nabla_{\x} f(\x,\y,\cdots,\z,\lambda)=\nabla_{\x}h_{\alpha}(\t)+\lambda\x,\\
&\nabla_{\y} f(\x,\y,\cdots,\z,\lambda)=\nabla_{\y}h_{\alpha}(\t)+\lambda\y,\\
&~~~~~~~~~~~~~~\vdots~~~~~~~~~~~~~~\vdots\\
&\nabla_{\z} f(\x,\y,\cdots,\z,\lambda)=\nabla_{\z}h_{\alpha}(\t)+\lambda\z,\\
&\nabla_{\lambda} f(\x,\y,\cdots,\z,\lambda)=0,\\
\end{aligned}
\right.
$$
which implies that
$$
\|\nabla f(\x,\y,\cdots,\z,\lambda)\|^2=-dh_{\alpha}(\t)^2+\|\nabla h_{\alpha}(\t)\|^2={\rm dist}(0, \partial\varphi(\t))^2,
$$
and $f(\x,\y,\cdots,\z,\lambda)=\varphi(\t)$, $f(\bar{\x},\bar{\y},\cdots,\bar{\z},\bar{\lambda})=\varphi(\bar{\t})$.
Take $\delta=\min\{\delta_1, \delta_2\}$. Combining this with (\ref{e14}), it holds hat, for all $\t\in\Lambda$ with $\|\t-\bar{\t}\|\leq\delta$,
and $\varphi(\bar{\t})\leq \varphi(\t)\leq\varphi(\bar{\t})+\eta$,
$$
{\rm dist}(0, \partial\varphi(\t))\geq c|f(\x,\y,\cdots,\z,\lambda)-f(\bar{\x},\bar{\y},\cdots,\bar{\z},\bar{\lambda})|^{1-\tau},
$$
and the desired results hold.
\qed

To conclude this section, we analyze the convergence rate of Algorithm \ref{alg32} for the non trivial cases of $d\ge 2$ and $n\ge 2$. Under reasonable assumptions, several classical results on convergence rates from the literature established the convergence rate analyses based on the KL property \cite{Hedy2009,WCP18,YY2013}. Specifically, if the desingularization function of $\varphi(\t)$ is $\phi(s)=cs^{1-\alpha}$, then as shown in \cite{Hedy2009}, the following estimates hold
\begin{itemize}
    \item[(i)] If $\alpha=0$, the sequence $\{\t^{(k)}\}$ converges in a finite number of steps.
    \item[(ii)] If $\alpha\in(0, 1/2]$, there exist constants $a>0$ and $\theta\in(0, 1)$ such that $\|\t^{(k)}-\bar{\t}\|\leq a\theta^k$.
    \item[(iii)] If $\alpha\in(1/2, 1)$, there exists a constant $a>0$ such that
    $$
    \|\t^{(k)}-\bar{\t}\|\leq ak^{-\frac{1-\alpha}{2\alpha-1}}.
    $$
\end{itemize}
Combining this with Theorem \ref{thm4} (where $\alpha=1-\tau$), we obtain the following result.

\begin{theorem}\label{thm5}
Assume $\{\t^{(k)}\}$ is an infinite sequence generated by Algorithm \ref{alg32}, and $\lim_{k\rightarrow\infty}\t^{(k)}=\bar{\t}$. For $d\geq 2$ and $n\geq 2$, there exists a constant $a>0$ such that
$$
\|\t^{(k)}-\bar{\t}\|\leq ak^{-\tau/(1-2\tau)},
$$
where $\tau=d^{-1}(3d-3)^{1-dn}$ is defined in Theorem \ref{thm4}.
\end{theorem}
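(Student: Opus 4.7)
The plan is to derive the convergence rate by directly applying the standard Kurdyka--{\L}ojasiewicz convergence rate framework summarized in cases (i)--(iii) just before the theorem, using the KL exponent computed in Theorem \ref{thm4}. All the structural hypotheses needed for that framework (sufficient decrease, relative error, continuity, KL property, and global convergence of the full sequence) have already been proved earlier in the section, so the remaining work is essentially bookkeeping.

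First, I would verify that the KL exponent $1-\tau$ of $\varphi$ at $\bar{\t}$, which comes from Theorem \ref{thm4}, falls in the regime $(1/2,1)$ corresponding to case (iii), not in $[0,1/2]$. For $d\geq 2$ and $n\geq 2$ one has $dn\geq 4$ and $3d-3\geq 3$, hence
\[
\tau \;=\; \frac{1}{d\,(3d-3)^{dn-1}} \;\leq\; \frac{1}{2\cdot 3^{3}} \;=\; \frac{1}{54} \;<\; \frac{1}{2},
\]
so $\alpha := 1-\tau \in (1/2,1)$, and case (iii) of the listed estimates applies.

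Second, I would check that the full hypotheses of the abstract convergence rate result (of the type of Lemma \ref{lema3} together with the cited references \cite{Hedy2009,WCP18,YY2013}) are in place for the merit function $\varphi(\t)=h_{\alpha}(\t)+\iota_{S}(\t)$ and the sequence $\{\t^{(k)}\}$: the sufficient decrease condition with constant $\bar{\gamma}/2$ from \eqref{e9} in Theorem \ref{thm2}(i); the relative error condition $\|\omega^{(k+1)}\|\leq M\|\t^{(k+1)}-\t^{(k)}\|$ from Proposition \ref{prop5}; the continuity condition from Theorem \ref{thm2}(ii); the KL property at $\bar{\t}$ with the explicit exponent $1-\tau$ from Theorem \ref{thm4}; and global convergence of $\{\t^{(k)}\}$ to the single limit $\bar{\t}$ from Theorem \ref{thm3}.

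Finally, invoking case (iii) with $\alpha=1-\tau$ yields
\[
\|\t^{(k)}-\bar{\t}\|\;\leq\; a\,k^{-\frac{1-\alpha}{2\alpha-1}}\;=\;a\,k^{-\tau/(1-2\tau)}
\]
for some constant $a>0$ and all sufficiently large $k$, which is the claimed bound (absorbing the initial indices into $a$ if necessary). The only potentially subtle point, and thus the main thing to be careful about, is matching conventions: the cited rate framework is typically phrased in terms of a desingularizing function $\phi(s)=c s^{1-\alpha}$, and one must check that ``KL exponent $1-\tau$'' in Theorem \ref{thm4} corresponds to $\alpha=1-\tau$ in that framework so that $(1-\alpha)/(2\alpha-1)=\tau/(1-2\tau)$; beyond this identification the argument is a direct citation.
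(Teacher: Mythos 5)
Your proposal is correct and follows essentially the same route as the paper's own proof: both reduce the theorem to verifying that the KL exponent $1-\tau$ from Theorem \ref{thm4} lies in $(1/2,1)$, i.e.\ that $\tau<1/2$ for $d\ge 2$, $n\ge 2$, and then invoke case (iii) of the cited Attouch--Bolte-type rate framework. Your version is slightly more explicit (giving the numeric bound $\tau\le 1/54$ and itemizing the already-established hypotheses), but the argument is identical in substance.
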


\proof
By Theorem \ref{thm4}, the merit function $\varphi(\t)$ satisfies the KL property with exponent $1-\tau$ at $\bar{\t}$. Hence, it suffices to verify that $1-\tau\in(1/2, 1)$. From the definition of $\tau$, it is evident that
$$
\tau=d^{-1}(3d-3)^{1-dn}<1/2,~\forall~d\geq 2,~n\geq 2,
$$
which ensures the desired result.
\qed

\section{Applications and Preliminary Numerical Results}\label{Sec4}

In this section, we apply the proposed Algorithm \ref{alg32} (under the framework of Algorithm \ref{alg31}) to two applications. The first application involves computing the extremal generalized tensor eigenpair, where we compare our algorithm with state-of-the-art methods from Kolda et al. \cite{Kolda2014}.
The second application utilizes Algorithm \ref{alg32} to minimize the high-order trust-region method \cite{Cartis2022}.

\subsection{Application to Extremal Generalized Tensor Eigenpair}


Let $\A$ and $\B$ be real-valued, $m$-th order, $n$-dimensional symmetric tensors.
Then, $(\lambda, \x) \in \R \times (\R^n \setminus \{\0\})$ is called a generalized eigenpair (also known as a $\B$-eigenpair) if the following holds:
\begin{equation}\label{e15}
\A\x^{m-1} = \lambda\B\x^{m-1}.
\end{equation}
By the homogeneity of \eqref{e15}, we always assume that the corresponding eigenvectors lie on the unit sphere. Additionally, we assume that \(m\) is even and \(\B\) is positive definite, i.e., \(\B\x^d > 0\) for all \(\x \in \R^n \setminus \{\0\}\). These assumptions are necessary, as the existence of generalized eigenpairs cannot be guaranteed otherwise. However, for certain special cases, such as $Z$-eigenpairs, \(m\) can be odd. A comparison among B-, Z-, H-, and D-eigenvalues is given in Appendix \ref{Appendix: Def of Tensor eig} and we also refer readers to Chang et al. \cite{CPT2009} and Kolda et al. \cite{Kolda2014}

To the best of our knowledge, the shifted symmetric high-order power method is one of the most effective numerical methods for finding $Z$-eigenpairs of symmetric tensors \cite{Kolda2011}. Later, Kolda et al. \cite{Kolda2014} extended this method to the generalized eigenproblem adaptive power (GEAP) method for computing the generalized eigenpairs of \eqref{e15}. In this section, we compare the performance of the PAM algorithm with the GEAP algorithm. Notably, we primarily focus on the minimal eigenpair case, i.e., where the parameter \(\beta = -1\) in GEAP \cite{Kolda2014}.

We tested  Algorithm \ref{alg32} in MATLAB R2023b and the Tensor Toolbox Version 3.6 \cite{Bader2012}. 
All tests were conducted on an Intel(R) Core(TM) i7-4770 CPU @ 3.40GHz 3.40 GH processor with 16 GB of RAM. The code is written in double precision. The tolerances for Algorithms \ref{alg31} and \ref{alg32} are set to $(\texttt{TOL}, \epsilon) = (10^{-3}, 10^{-6})$, respectively. 

The following example is originally from \cite{Kofidis2002} and was used to evaluate the shifted symmetric high-order power algorithm in \cite{Kolda2011}. Here, we focus on computing the minimal $Z$-eigenpair in Example \ref{example2} (Example 5.1 in \cite{Kolda2014}).

\begin{example}\label{example2} \textbf{(Computing  $Z$-eigenpair)}
Let $\A = (a_{i_1i_2i_3i_4})$, $i_1, i_2, i_3, i_4 \in [3]$, be a symmetric fourth-order three-dimensional tensor with all nonzero entries as detailed in Appendix \ref{appendix coeff}. Our objective is to compute the smallest $Z$-eigenpair of $\A$, let $\B = \mathcal{E}$, where $\mathcal{E}$ is the identity tensor such that $\mathcal{E}\x^{m-1} = \|\x\|^{m-2}\x$ for all $\x \in \R^n$ \cite{CPT2009}.
\end{example}

Tables \ref{table1}--\ref{table for example 2} show the results of 100 runs using random initial guesses, with each entry selected uniformly randomly from the interval $[-1, 1]$; the same set of random starts was used for each set of experiments. For each eigenpair, the table lists the number of occurrences in the 100 experiments, the eigenvalue found in each run along with its standard deviation and eigenvectors, the mean number of iterations for PAM until convergence with its standard deviation, and the average error and standard deviation. We tested Example \ref{example2} with various values of $\alpha$ and $\{\gamma_i\}_{1 \le i \le d}$.

For the tested parameters, all problems converged to a $Z$-eigenpair of the tensor system with an accuracy of $10^{-6}$, satisfying $\A[\x]^4 = \lambda \x$.
Notably, in all experiments, the outer iteration (i.e., the Dinkelbach--Type Algorithm \ref{alg31}) converged in a single iteration, demonstrating that the PAM algorithm solves the subproblem to a local minimum with sufficient accuracy.
Overall, Algorithm \ref{alg32} achieved the global minimum (i.e., the smallest eigenvalue $\lambda = -1.095$) in a greater number of instances compared to GEAP in \cite{Kolda2014}, with approximately 50-60\% convergence versus 40\% reported in Kolda  \cite{Kolda2014}. With larger $\gamma_i$, the algorithm exhibits an increasing tendency to locate the global minimum.
The iteration count and CPU time are comparable to GEAP. However, since the two methods were computed using tolerance, different software versions, and computational setups, further investigation is required to ensure a fair comparison under identical conditions.

\begin{table}[!h]
\centering
  \caption{\small  Example \ref{example2}: Parameters are set at $\gamma_1 =\dotsc=\gamma_d = 1,5$ from top to bottom, $\alpha = \|A\|_F$.  }
\centering
\label{table1}
\begin{tabular}{|c|c|c|c|c|}
\hline
\textbf{occ.(\%)} & $\lambda$ & $\mathbf{x}$ & $its$ & \text{CPU time}   \\[1ex]
\hline
42 &  $-1.0954 $  & $\pm[0.5916 , -0.7461,-0.3045]^\top $ & $16.8 \pm 3.3$ & $0.055 \pm 0.011$ \\*[1ex]
\hline
33 &  $-0.5629 $  &  $\pm[0.1771 , -0.1793, 0.9677]^\top $& $18.3 \pm 3.4$  & $0.061 \pm 0.011$\\*[1ex]
\hline
25 &   $-0.0451 $   &  $\pm[0.7797 , 0.6136 , 0.1246]^\top $ & $32.0 \pm 4.3$ & $0.105 \pm 0.014$ \\*[1ex]
\hline
\end{tabular}

\bigbreak
\begin{tabular}{|c|c|c|c|c|}
\hline
\textbf{occ. (\%)} & $\lambda$ & $\mathbf{x}$ & $its$ & \text{CPU time}   \\[1ex]
\hline
43 &  $-1.095$  & $\pm[0.5916 , -0.7461, -0.3045]^\top$ & $34.6 \pm 9.4$ & $0.121 \pm 0.034$ \\*[1ex]
\hline
36 &  $-0.5631$  & $\pm[0.1771 , -0.1793, 0.9677]^\top$ & $37.2 \pm 6.5$ & $0.130 \pm 0.029$ \\*[1ex]
\hline
21 &  $-0.0450 $  & $\pm[0.7797 , 0.6136 , 0.1246]^\top$ & $68.6 \pm 17.2$ & $0.238 \pm 0.071$ \\*[1ex]
\hline
\end{tabular}


\end{table}

\begin{table}[!h]
\centering
  \caption{\small  Example \ref{example2}: Parameters are set at $\gamma_1 =\dotsc=\gamma_d = 1$, $\alpha = 10, 0.1$ from top to bottom.  }
\centering
\begin{tabular}{|c|c|c|c|c|}
\hline
\textbf{occ. (\%)} & $\lambda$ &$x$ & $its$ & \text{CPU time}   \\[1ex]
\hline
43 & $-1.0953 $  & $\pm[0.5916 , -0.7461,- 0.3045 ]^\top $ & $25 \pm 5 $ & $ 0.05\pm 0.01$ \\*[1ex]
\hline
38 & $-0.5629  8$   &  $ \pm [0.1771 , -0.1793, 0.9677]^\top $& $28 \pm 6$  & $0.07\pm 0.01 $\\*[1ex]
\hline
19 &  $-0.0451  $  &  $ \pm[0.7797   , 0.6136  ,  0.1246]^\top $ & $53\pm 6$ & $ 0.12\pm 0.01 $\\*[1ex]
\hline
\end{tabular}
\bigbreak
\begin{tabular}{|c|c|c|c|c|}
\hline
\textbf{occ. (\%)} & $\lambda$ &$x$ & $its$ & \text{CPU time}   \\[1ex]
\hline
70 & $-1.0952 $ & $\pm[0.5916 , -0.7461,- 0.3045 ]^\top $ & $22 \pm 6 $ & $ 0.06\pm 0.02$ \\*[1ex]
\hline
11 & $ -0.5629   $ &  $ \pm [0.1771 , -0.1793, 0.9677]^\top $& $99 \pm 51$  & $0.24\pm 0.11 $\\*[1ex]
\hline
19 &   $0.8663 $&  $ \pm[0.7797   , 0.6136  ,  0.1246]^\top $ & $40 \pm 16$ & $ 0.10\pm 0.04 $\\*[1ex]
\hline
\end{tabular}
\label{table for example 2}
\end{table}

The following example is adapted from \cite{Kolda2014} for computing the H-eigenpairs of a symmetric tensor (Example 5.2 in \cite{Kolda2014}).

\begin{example}\label{example3}
Let $\A=(a_{i_1i_2i_3i_4i_5i_6}), i_j\in[4], j\in[6]$ be a sixth-order four-dimensional symmetric tensor with nonzero entries defined in  Appendix \ref{appendix coeff}.
Denote tensor $\B$ with entries such that
$$
b_{i_1i_2i_3i_4i_5i_6}=\left\{
\begin{array}{ll}
1,&{\rm if}~i_1=i_2=i_3=i_4=i_5=i_6,\\
0,&{\rm otherwise}.
\end{array}
\right.
$$
Therefore, for any $\x\in\R^4$, $\B\x^5=\x^{[5]}=(x_1^5,x_2^5,x_3^5,x_4^5)^\top$.
\end{example}

Table \ref{table for example 3} presents the eigenpairs computed by Algorithm \ref{alg32} within the framework of Algorithm \ref{alg31} over 100 random trials. The initialization is chosen as random numbers ranging from \(0\) to \(1\).  
For all trials, the algorithm converges within the error bounds \(\epsilon = 10^{-6}\) and \(\texttt{TOL} = 10^{-3}\). We identify three eigenvalues (\(-3.7082\), \(-2.0798\), and \(-1.9568\)), which represent local minima. The standard deviation for each random initialization is below \(10^{-12}\) in all trials. All trials are verified to converge to an eigenvalue satisfying \(\A[x]^5 = \B[x]^5\), with convergence achieved in 6 outer iterations and approximately 200–400 inner iterations. Both outer and inner iterations exhibit monotonically decreasing error values.

\begin{table}[!h]
\centering
  \caption{\small Example \ref{example3}: Parameters are set at $\gamma_1 =\dotsc=\gamma_d = 3$, $\alpha = 3,$ averaged over 100 random trials.  }
\centering
\begin{tabular}{|c|c|c|c|c|}
\hline
\textbf{Occ. (\%)} & $\lambda$ & \textbf{Inner Its.} & \textbf{Outer Its.} & \textbf{CPU Time (s)} \\[1ex]
\hline
23 & $-3.7082 \pm 1 \times 10^{-15}$ & $243.30 \pm 5.09$ & $6 \pm 0.00$ & $3.279 \pm 0.214$ \\*[1ex]
\hline
30 & $-2.0798 \pm 1 \times 10^{-15}$ & $419.83 \pm 5.76$ & $6 \pm 0.00$ & $5.774 \pm 0.340$ \\*[1ex]
\hline
47 & $-1.9568 \pm 2.101 \times 10^{-12}$ & $310.36 \pm 22.28$ & $6 \pm 0.00$ & $4.189 \pm 0.306$ \\*[1ex]
\hline
\end{tabular}

\label{table for example 3}
\end{table}

Next, an example is adapted from \cite{Kolda2014} for computing the D-eigenpair problem\footnote{More details about the D-eigenpair problem can be found in Appendix \ref{Appendix: Def of Tensor eig}}.

\begin{example}\label{example4} The problem was proposed by Qi, Wang, and Wu \cite{Qi2008} for diffusion kurtosis imaging (DKI)\footnote{Note that only four digits of precision for the tensors are provided in \cite{Kolda2014,Qi2008}. Similar to \cite{Kolda2014}, we were unable to validate the solutions reported in the original paper \cite{Qi2008}. It is unclear whether this discrepancy is due to a lack of precision or a typographical error in the paper. Similar to  \cite{Kolda2014}, the problem is also rescaled. The results in Table 5.3 of \cite{Kolda2014} correspond to $\mathcal{D}$ and $\mathcal{A}$ of order $[2,3]$, and the result in \ref{table for example 4} correspond to $\mathcal{A}$ and $\mathcal{B}$ in Appendix \ref{appendix coeff}. }
. We consider this example here since it can be expressed as a generalized tensor eigenproblem  (Example 5.3 in \cite{Kolda2014}). $\A, \B$ are symmetric tensors with order 4 and dimension 3 with entries given in Appendix \ref{appendix coeff}.
\end{example}

Table \ref{table for example 4} presents the eigenpairs computed by Algorithm \ref{alg32} under the framework of Algorithm \ref{alg31}. For all trials, the algorithm converges within the error bound \(\|\A \x^{m-1} - \lambda B \x^{m-1}\|_2 \le \texttt{TOL}\). We identify three eigenvalues from our random trials. Unlike the Z-eigenvalue case in Example \ref{example4}, where convergence occurs in a single outer iteration of the Dinkelbach--Type Algorithm \ref{alg31}, the computation of D-eigenvalues typically requires 3-5 outer iterations. Notably, we observe that the function values generated by Step 4 of Algorithm \ref{alg32} decrease monotonically, as expected.

\begin{table}[!h]
\centering
  \caption{\small Example \ref{example4}: Parameters are set at $\gamma_1 =\dotsc=\gamma_d = 1$, $\alpha = 10,$ averaged over 80 random trials.  }
\centering
\begin{tabular}{|c|c|c|c|c|}
\hline
\textbf{Occ. (\%)} & $\lambda$ & \textbf{Inner Its.} & \textbf{Outer Its.} & \textbf{CPU Time (s)} \\[1ex]
\hline
42.5 & $-0.2268 \pm 2.19 \times 10^{-9}$ & $119.41 \pm 26.26$ & $4.38 \pm 0.65$ & $0.795 \pm 0.185$ \\*[1ex]
\hline
28.75 & $-0.1241 \pm 1.09 \times 10^{-9}$ & $110.78 \pm 18.76$ & $4.22 \pm 0.52$ & $0.738 \pm 0.131$ \\*[1ex]
\hline
28.75 & $-0.0426 \pm 8.27 \times 10^{-13}$ & $106.43 \pm 14.85$ & $5.00 \pm 0.00$ & $0.871 \pm 0.058$ \\*[1ex]
\hline
\end{tabular}
\label{table for example 4}
\end{table}

\subsection{Minimizing the High-order Trust-region Subproblem}

In contrast to first-order and second-order methods, high-order methods (with order $p \geq 3$) have garnered significant research interest due to their faster global and local convergence rates \cite{Cartis2020b}. However, efficiently solving the associated subproblem, which involves a $p$th-order Taylor derivative, remains an open question for $p\ge 3$. Algorithm \ref{alg32} represents an initial attempt to minimize such subproblems effectively under a tensor eigenvalue framework.

We consider the unconstrained nonconvex optimization problem
\[
\min_{\x \in \R^n} \tilde{f}(\x),
\]
where $\tilde{f} : \R^n \to \R$ is nonconvex, $p$-times continuously differentiable ($p \geq 1$) and bounded below.
One of the key techniques in optimization involves approximating $\tilde{f}(\x+\s)$ at the current iterate $\x=\x_k$ using a $p$th-order Taylor expansion \( T_p(\s) \). This expansion is then minimized iteratively under the trust-region constraint \( \|\s\| \leq \Delta \), where \(\Delta > 0\)
\begin{equation}
\argmin_{\|\s\| \leq \Delta} T_p(\s) := \tilde{f}(\x_k) + \sum_{j=1}^p \frac{1}{j!} \nabla_{\x}^j \tilde{f}(\x_k)[\s]^j.
\label{subprob}
\end{equation}
Here, $\s \in \R^n$ and $\nabla_{\x}^j \tilde{f}(\x_k)$ is a symmetric $j$th-order tensor.

For $p=1$, \eqref{subprob} reduces to the steepest descent model with step size control, while for $p=2$, it gives the trust-region model, both of which have well-established minimization algorithms. However, for $p \geq 3$, \eqref{subprob} leads to (nonconvex) high-order trust-region models \cite{Cartis2022},  whose minimization remains an open research question. While recent literature works focus on solving the high-order subproblem under an adaptive regularization setup \cite{Cartis2023,Nesterov2021,Zhu2023,Zhu2024,Zhu2022,Cartis2024Efficient,Zhu2024Global},
in this subsection, we give preliminary numerical illustrations of using the Proximal Alternating Minimization (PAM) algorithm to minimize the $p$th-order Taylor model on the trust-region boundary \(\|\s\| = \Delta\).

The first step involves expressing the Taylor model \eqref{subprob} in the form of a symmetric tensor expression. To achieve this, we represent $T_p(\s) $ as a polynomial
\[
T_p(\s) = \sum_{\alpha \in \mathbb{Z}_+^n} f_\alpha \s^\alpha,
\]
where \(\s = [s_1, \dotsc, s_n]^\top\),
\[
\s^\alpha = s_1^{\alpha_1}s_2^{\alpha_2}\cdots s_n^{\alpha_n}, \quad |\alpha| = \alpha_1 + \alpha_2 + \cdots + \alpha_n \leq p,
\]
and \(\alpha_1, \dotsc, \alpha_n \in \{0, \dotsc, p\}\). Define a symmetric tensor \(\T \in \R^{(n+1)^{p}}\) with entries
\begin{equation}
\T\big\{\pi(i_1, \dotsc, i_{p})\big\} = \frac{(p-|\alpha|)!\alpha_1!\cdots\alpha_n!}{p!} f_\alpha, \quad i_1, \dotsc, i_{p} \in \{0, 1, \dotsc, n\},
\label{M}
\end{equation}
where \(\T\big\{i_1, \dotsc, i_{p}\big\}\) denotes the \([i_1, \dotsc, i_{p}]\)-th entry of the tensor \(\T\), \(\pi(i_1, \dotsc, i_{p})\) is the permutation of indices, and \(f_\alpha\) is the coefficient of \(\s^\alpha\). Using this definition, the Taylor model can be reformulated as the symmetric tensor expression,
\begin{eqnarray}
\label{tensor reform}
T_p(\s) = \T[\s_c]^{p} = \sum_{i_1, \dotsc, i_{p}} \T\big\{\pi(i_1, \dotsc, i_{p})\big\} s_{i_1} \dotsc s_{i_{p}},
\end{eqnarray}
where \(\s_c = [1, s_1, \dotsc, s_n]^\top \in \R^{n+1}\).

The minimization of $T_p$ on the boundary can be reformulated as a tensor eigenvalue problem.
If the minimizer of \eqref{subprob} lies on the boundary \(\|\s\| = \Delta\), \eqref{subprob} transforms into the Z-eigenvalue problem 
\begin{equation}
\argmin_{\s \in \R^{n+1}} \T[\s_c]^{p} \quad \text{s.t.} \quad \|\s\| = \Delta
\label{gep z}
\end{equation}
where $\s_c = [1, \s]^\top = [1, s_1, \dotsc, s_n]^\top \in \R^{n+1}$. Due to the constraint that the first entry of \(\s_c\) is fixed at 1. The closed-form expression for the KKT point in the PAM subproblem \eqref{closed form sol} becomes
\begin{eqnarray}
\label{closed form sol TR}
\x^{(k+1)} = \pm \Delta \frac{(\0,I) \big[\T\tilde{\y}^{(k)}\cdots\tilde{\u}^{(k)}\tilde{\z}^{(k)} - \alpha \tilde{\y}^{(k)}\cdots\langle\tilde{\u}^{(k)},\tilde{\z}^{(k)}\rangle - \gamma_1 \tilde{\x}^{(k)}\big]}
{\bigg\|(\0,I) \big[\T\tilde{\y}^{(k)}\cdots\tilde{\u}^{(k)}\tilde{\z}^{(k)} - \alpha \tilde{\y}^{(k)}\cdots\langle\tilde{\u}^{(k)},\tilde{\z}^{(k)}\rangle - \gamma_1 \tilde{\x}^{(k)} \big] \bigg\|}.
\end{eqnarray}
Similar to \eqref{closed form sol}, if the denominators of \eqref{closed form sol TR} equal zero, we may set \(\x^{k+1} = \x^{k}\) and adjust the parameters \(\gamma_1\) and \(\alpha\). 
Here, \(\tilde{\y} = (1, \y^\top)^\top\), \(\tilde{\u} = (1, \u^\top)^\top\), \(\tilde{\z} = (1, \z^\top)^\top\), \(\tilde{\x} = (1, \x^\top)^\top \in \R^{n+1}\), and \(I \in \R^{n \times n}\) is the identity matrix and $(\0,I)\in\mathbb{R}^{n\times(n+1)}$. 
Let $\s^*$ be the solution of the Z-eigenvalue problem 
in \eqref{gep z}. {Let \(\lambda^* = (\s^*)^\top \nabla T_3(\s^*) / \Delta^2\), and define the Lagrangian multiplier as  $$
L(\s, \lambda) := T_p(\s) + \frac{1}{2} \lambda \|\s\|^2.
$$
The gradient of the Lagrangian is zero at \((\lambda^*, \s^*)\), such that  
$$
\nabla L(\s^*, \lambda^*) := \nabla T_p(\s^*) + \lambda^* \s^* = \textbf{0}.
$$
We provide the algorithm for minimizing the high-order trust-region subproblem in Algorithm~\ref{alg33}.}

\begin{algorithm}[!htbp]
\caption{Algorithm for Minimizing the High-order Trust Region Subproblem}\label{alg33}
\begin{algorithmic}[1]
\STATE \textbf{Input:} $\Delta > 0$, the Taylor polynomial \(T_p(\s)\), its derivative \(\nabla T_p(\s)\). Set a tolerance level \texttt{TOL} and a maximum iteration count \(k_{\max}\). Initialize   \(\s_0 = \textbf{0} \in \R^n\), \(\lambda_0 = 0\), \(k = 0\) and $
\nabla L(\s_0, \lambda_0) := \nabla T_p(\s_0) + \lambda_0 \s_0$.\medskip

\STATE Generate the corresponding homogeneous tensor formulation \(\T \in \R^{(n+1)^p}\) using \eqref{tensor reform}.
 
\STATE While \( \|\nabla_{\s} L(\s_k, \lambda_k)\| = \|\nabla T_p(\s_k) + \lambda_k \s_k\| \geq \texttt{TOL} \) and \(k \leq k_{\max}\), perform Steps 4-5. Otherwise, terminate and return \((\s_k, \lambda_k)\). 

\STATE Solve the subproblem using Algorithm~\ref{alg32}, where the solution is given in the form \eqref{closed form sol TR} for the Z-eigenvalue problem in \eqref{gep z}. Return \(\s_k\) as the output of this step.\medskip

\STATE Update  
\[
\lambda_{k} = \frac{{\s_k}^\top \nabla T_p(\s_k)}{\Delta^2}.
\]
Set \(k := k + 1\) and return to Step 2.
\end{algorithmic}
\end{algorithm}

\subsubsection{Preliminary Numerical Results}

\textbf{Numerical Set-up}: We construct third-order Taylor polynomials,
\[
T_{3}(\s) = f_0 + g^\top\s + \frac{1}{2} H [\s]^2 + \frac{1}{6} T [\s]^3,
\]
to test the PAM algorithm. Specifically, the coefficients \(g\), \(H\), and \(T\) are generated as follows:
\[
g = \texttt{a*randn(n, 1)},  H = \texttt{b*symm(randn(n, n))}, T = \texttt{c*symm(randn(n, n, n))},
\]
where \(g\) is a random \(n\)-dimensional vector multiplied by a scaling factor \texttt{a}; \(H\) is a random symmetric \(n \times n\) matrix multiplied by \texttt{b}; and \(T\) is a random supersymmetric \(n \times n \times n\) tensor multiplied by \texttt{c}. Here, \(n\) represents the dimension of the problem (\(\s\in \mathbb{R}^n\)), and \texttt{symm(randn())} denotes a symmetric matrix or tensor whose entries follow a normal distribution with mean zero and variance one. The parameters \texttt{a}, \texttt{b}, and \texttt{c} are chosen differently to test the algorithm's performance under various scenarios. 
{For Algorithm \ref{alg33}, the stopping criterion is set to \(\epsilon = 10^{-5}\), specifically, \(\|\nabla_{\s} L(\s^*, \lambda^*)\| \leq 10^{-5}\). Note the stopping criterion for Step 4 is set such that the PAM (Algorithm \ref{alg32}) stops if the error reduction is less than \( \epsilon = 10^{-9} \). }
 Under this formulation, in all our examples, \(T_3\) represents nonconvex cubic polynomials.

We conducted tests for the PAM algorithm (i.e., Algorithm \ref{alg32}, with the subproblem solved using \eqref{closed form sol TR}) to minimize \(T_3\) subject to the constraint \(\|\s\| = \Delta\). The numerical results are summarized in Table \ref{table 1}. The PAM algorithm demonstrates a monotonic reduction in the function value of \(T_3\) across each iteration, as shown in Figure \ref{fig error reduction}. In all tested cases, the PAM algorithm successfully converged to the Z-eigenvalue of \eqref{gep z}. For lower-dimensional cases ($n = 2$ to $n = 6$), the minimizers obtained were verified to be the global minimizers of \(T_3(\s)\) on the constraint \(\|\s\| = \Delta\).

In our numerical examples, we tested the first- and second-order KKT conditions. 
For all the numerical examples and the identified pairs \((\s^*, \lambda^*)\), the first-order optimality conditions were approximately satisfied, with \(\|\nabla_{\s} L(\s^*, \lambda^*)\| \leq 10^{-5}\). 
Let \(S_{\perp} \in \R^{n \times (n-1)}\) represent the orthogonal complement subspace of \(\s^*\). The second-order derivative for the  Lagrangian multiplier is  $
\nabla^2_{\s\s}L(\s, \lambda) := \nabla^2_{\s\s}T_p(\s) +  \lambda I_n.
$
For all pairs \((\s^*, \lambda^*)\), we examined the minimum eigenvalue of  
\[
S_{\perp}^\top \nabla^2_{\s\s} L(\s^*, \lambda^*) S_{\perp},
\] 
and found that the Hessian is positive definite within the subspace spanned by \(S_{\perp}\). In these cases, only one eigenvalue of \(\nabla^2_{\s\s} L(\s^*, \lambda^*)\) was negative. This negative eigenvalue corresponded to the eigenvector \(\pm \s\), indicating there is only one possible descent direction—either inward or outward—from the boundary into the trust-region.

\begin{table}[!ht]
\caption{\small \textbf{Test for Nonconvex $H$.}
Parameters for the cubic polynomial and trust-region: \texttt{a} $=80$, \texttt{b} $=80$, \texttt{c} $=80$, $\Delta = 2$.  \(S_{\perp} \in \R^{n \times (n-1)}\) represents the orthogonal complement subspace of \(\s^*\). Iter. represents the number of iterations performed in Algorithm~\ref{alg32}. 
Parameters for the PAM algorithm: $\gamma_i = 8$ for $i = 1, \dotsc, 3$, $\alpha = 1$, $\s_0 = \mathbf{0} \in \R^n$.}
\centering
\begin{tabular}{ccccccc}
\toprule
\textbf{$n$} & \textbf{Iter.} & \textbf{$\lambda^*$} & \textbf{$ T_3(\s^*)$} & \textbf{$\|\nabla_{\s} L(\s^*, \lambda^*)\|$} & \textbf{$S_{\perp}^T\nabla_{\s \s}^2 L(\s^*, \lambda^*) S_{\perp}$} & \textbf{Time (s)} \\
\midrule
2  & 16   & 161.10  & -1878.00  & $3.32 \times 10^{-6}$  & $\succ 0$  & 0.0772 \\
3  & 28   & 138.70  & -965.00   & $0.75 \times 10^{-6}$  & $\succ 0$  & 0.1177 \\
4  & 41   & 160.10  & -1748.00  & $5.62 \times 10^{-6}$  & $\succ 0$  & 0.1601 \\
5  & 144  & 238.00  & -2236.00  & $8.16 \times 10^{-6}$  & $\succ 0$  & 0.5526 \\
6  & 55   & 271.00  & -2885.00  & $9.99 \times 10^{-6}$  & $\succ 0$  & 0.2104 \\
10 & 167  & 564.90  & -5777.00  & $8.52 \times 10^{-6}$  & $\succ 0$  & 0.6880 \\
20 & 195  & 760.70  & -7693.00  & $9.78 \times 10^{-6}$  & $\succ 0$  & 0.9579 \\
30 & 135  & 1015.00 & -10151.00 & $8.90 \times 10^{-6}$  & $\succ 0$  & 1.2128 \\
40 & 178  & 1096.30 & -11014.00 & $8.89 \times 10^{-6}$  & $\succ 0$  & 3.2476 \\
\bottomrule
\end{tabular}
\label{table 1}
\end{table}

\begin{figure}[!ht]
\centering
\caption{\small \textbf{Reduction in $T_3$ for Each Iteration of the PAM Algorithm:} Parameters are the same as in Table \ref{table 1}, with $n=15$ and $\Delta = 15$.}
\includegraphics[width=7cm]{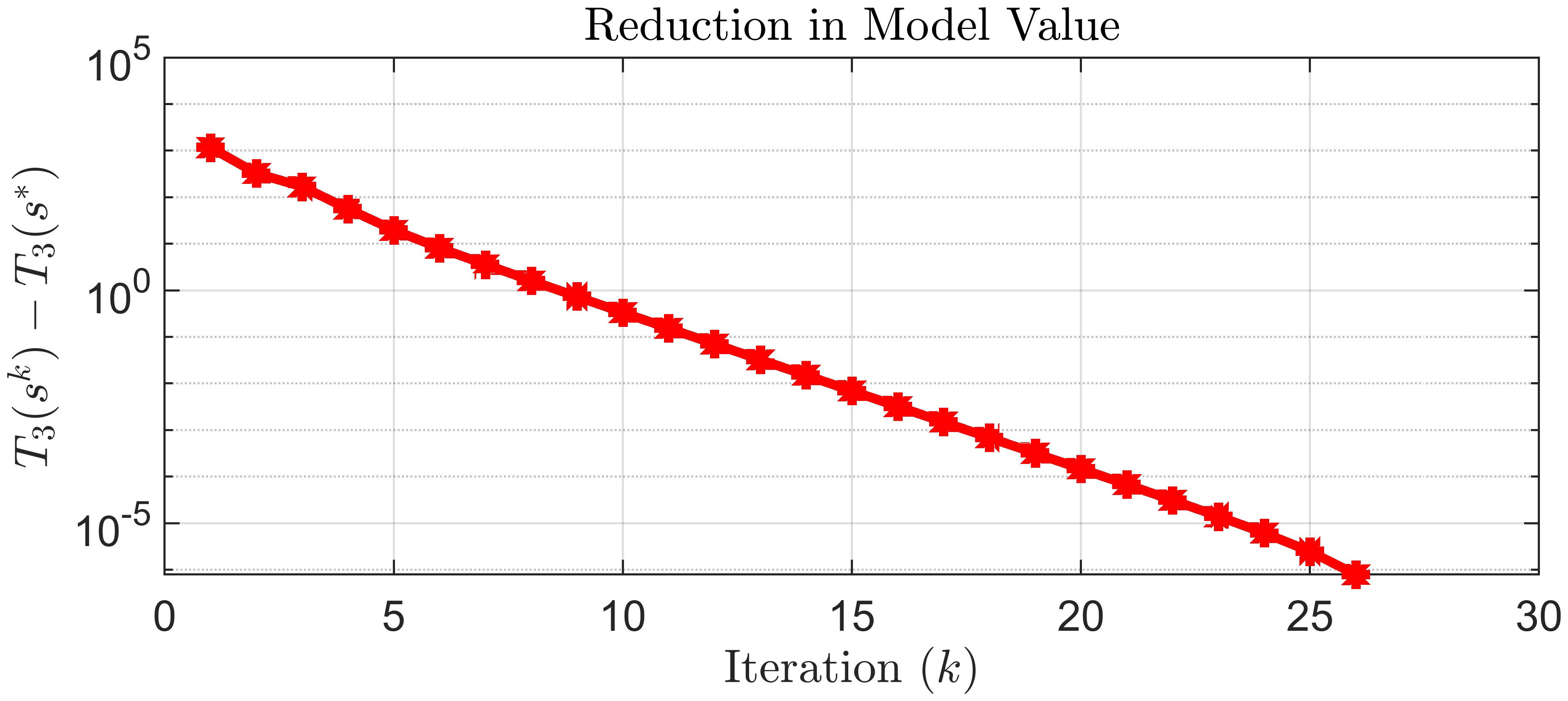}
\label{fig error reduction}
\end{figure}

We also tested the algorithm on \(T_3\) which has a small gradient $\|g\| \approx 0$ and a positive definite Hessian (\(H \succ 0\)). Such cases are considered challenging for many first- and second-order iterative minimization algorithms. When zero initialization (\(\s_0 = \mathbf{0}\)) was applied, algorithms often became trapped at \(\s_0 = \mathbf{0}\) or near a local minimum close to zero. As outlined in Table \ref{table 2}, the PAM algorithm addressed this issue by compelling the step size to move away from the local minimum at \(\s_0 = \mathbf{0}\) and converging to a lower minimum value at \(\|\s\| = \Delta\).



\begin{table}[!ht]
\caption{\small \textbf{Hard Case: Test for Near-Zero $g$ and Convex $H$.} Parameters for the cubic polynomials: \texttt{a} $=10^{-8}$, $H = \texttt{symm(40*randn(n)+ n*eye(n))}$, where $H \succ 0$, $\Delta = 2$, and \texttt{c} $=40$. Parameters for the PAM algorithm is kept the same as Table \ref{table 1}.}
\centering
\begin{tabular}{ccccccc}
\toprule
\textbf{$n$} & \textbf{Iter.} & \textbf{$\lambda^*$} & \textbf{$ T_3(\s^*)$} & \textbf{$\|\nabla_{\s} L(\s^*, \lambda^*)\|$} & \textbf{$S_{\perp}^T\nabla_{\s \s}^2 L(\s^*, \lambda^*) S_{\perp}$} & \textbf{Time (s)} \\
\midrule
2  & 17   & 148.10  & -1660.70  & $1.41 \times 10^{-6}$  & $\succ 0$  & 0.0794 \\
4  & 34   & 156.30  & -1646.70  & $5.57 \times 10^{-6}$  & $\succ 0$  & 0.1337 \\
5  & 81   & 235.10  & -2123.70  & $9.09 \times 10^{-6}$  & $\succ 0$  & 0.3150 \\
6  & 69   & 250.10  & -2540.80  & $9.71 \times 10^{-6}$  & $\succ 0$  & 0.2542 \\
10 & 165  & 540.70  & -5400.40  & $9.98 \times 10^{-6}$  & $\succ 0$  & 0.6586 \\
20 & 342  & 721.00  & -7272.40  & $9.19 \times 10^{-6}$  & $\succ 0$  & 1.7527 \\
30 & 349  & 919.00  & -9092.00  & $9.85 \times 10^{-6}$  & $\succ 0$  & 4.0148 \\
40 & 220  & 1008.70 & -9765.80  & $9.03 \times 10^{-6}$  & $\succ 0$  & 4.3313 \\
\bottomrule
\end{tabular}
\label{table 2}
\end{table}

In Figure \ref{fig delta}, we investigate the effect of varying \(\Delta\) on \(\lambda^*\) and the number of PAM iterations required to reach the minimum. Our numerical experiments consistently reveal that \(\lambda^*\) remains positive across all \(\Delta > 0\). The value of \(\lambda^*\) decreases as \(\Delta\) increases ( \(\lambda^* \to 0\) as \(\Delta \to \infty\), and \(\lambda^* \to \infty\) as \(\Delta \to 0\) ) as illustrated by the first plot of Figure \ref{fig delta}. For smaller \(\lambda^*\) values (or equivalently larger trust-region radii \(\Delta\)), \(\min_{\|\s\| = \Delta} T_3(\s^*)\) achieves a lower value as shown in the second plot of Figure \ref{fig delta}. This outcome aligns with expectations: in the trust-region algorithm, a larger trust-region radius \(\Delta\) typically corresponds to a less conservative local subproblem model, allowing for longer step sizes and greater decreases in the model value. Regarding the iteration count, there is a general tendency for the iteration count to increase as the trust-region expands (see the third plot of Figure \ref{fig delta}).

\begin{figure}[!ht]
    \centering

\caption{\small \textbf{Change of $\lambda^*$, $T_3(\s^*)$, and PAM Iter. Count for Varied $\Delta \in [1, 10]$:} The parameters are kept the same as those specified in Table \ref{table 1} with $n=15$. }
\includegraphics[width=12cm]{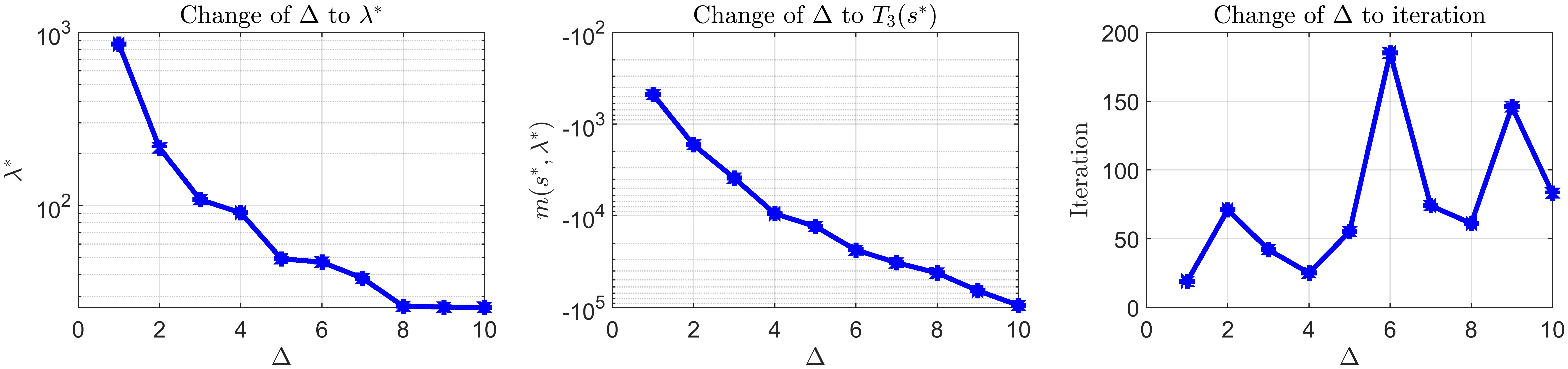}
\label{fig delta}
\end{figure}

To conclude this section, we remark that in this subsection, we have only addressed the minimizer of $T_3$ on the boundary where \(\|\s^*\| = \Delta\). For cases where the minimizer satisfies \(\|\s^*\|<\Delta\), we have $ \nabla T_p(\s) = \textbf{0} $. By choosing a symmetric homogeneous tensor $\B$ such that \(\B[\s_c]^{p} = 1\) equivalent to \(\s^\top \nabla T_p(\s) = 0\) for any $\u \in \R^n$, we can relate the first-order critical point of $T_p(\s)$ for $\|\s\|<\Delta$ with the $B$-eigenvalue problem.
\[
\min_{\s_c \in \R^{n+1}} \T[\s_c]^{p} \quad \text{s.t.} \quad \B[\s_c]^{p} = 1,
\]
where \(\B\{(0, \dotsc, 0)\} := 1\) and
\[
\B\big\{\pi(i_1, \dotsc, i_{p})\big\} := \frac{(p-|\alpha|)!\alpha_1!\cdots\alpha_n!}{(p-1)!} f_\alpha.
\]
Note that $
\s^\top \nabla T_p(\s) =  \sum_{\alpha\in\mathbb{Z}^n_+} \alpha f_{\alpha}\s^{\alpha}\in \mathbb{R}_{p}[s]$,
where
$\s^{\alpha}=s_1^{\alpha_1}s_2^{\alpha_2}\cdots s_n^{\alpha_n},
|\alpha|=\alpha_1+\alpha_2+\cdots+\alpha_n\leq p,$
$\alpha_1,\alpha_2,\cdots,\alpha_n\in\{0, 1,\dotsc,p\}$ and $f_{[0, \dotsc,0]}=0$. 
Further explorations of the PAM algorithm under the high-order trust-region model or adaptive regularization framework with efficient numerical linear techniques \cite{Xu2024UTV,Xu2024,Schnabel1984Tensor,Schnabel1991Tensor} are left for future work.

\section{Conclusions}\label{Sec5}

In this paper,  we proposed a novel tensor-based Dinkelbach--Type method for computing extremal tensor generalized eigenvalues. We established the equivalence between a homogeneous polynomial optimization problem with a spherical constraint and a multilinear programming problem with multiple spherical constraints. Leveraging this equivalence, we addressed a class of homogeneous single-ratio fractional programs and proposed an efficient proximal alternating minimization (PAM) algorithm to solve the original problem. The algorithm's subsequential and global sequence convergence were rigorously proven. We have tested our method numerically on several problems from the literature,
including computing of Z-, H-, and D-eigenpairs. Numerical experiments confirmed the efficiency and practical applicability of the proposed method.

Several intriguing and open questions remain for future investigation. First, when the objective in the polynomial optimization problem is an inhomogeneous function, does a similar equivalence with a multilinear optimization model still hold? For the homogeneous case, can the spherical constraint be generalized to other compact constraint sets? Furthermore, can the conjecture posed in Remark \ref{remark conjecture} be resolved? Although these questions present significant theoretical challenges, their resolution would yield valuable insights for implementable tensor methods.

\smallbreak
\noindent

\medskip
\begin{acknowledgements}
This work was supported by the National Natural Science Foundation of China (12071249), Shandong Provincial Natural Science Foundation for Distinguished Young Scholars (ZR2021JQ01), Shandong Provincial Natural Science Foundation (ZR2024MA003), Hong Kong Innovation and Technology Commission (InnoHK Project CIMDA).
\end{acknowledgements}

\appendix

\section{Convergence of Algorithm \ref{alg31}}
\label{appendix convergence of algo 1}

\begin{proposition}\label{prop2} {\rm\cite{CFS1985}}
Assume $\{\theta_k\}$ and $\{\x^{(k)}\}$ are infinite sequences generated by Algorithm \ref{alg31}. Then the following statements hold.

\noindent(1) For all $k$, $\theta_k\geq\theta_{k+1}\geq\bar{\theta}$.

\noindent(2) For all $k$, $F(\theta_k)\leq F(\theta_{k+1})\leq0$.

\noindent(3) The sequence $\{\theta_k\}$ converges linearly to $\bar{\theta}$, and each convergent subsequence of $\{\x^{(k)}\}$ converges to an optimal solution of (\ref{e1}).
\end{proposition}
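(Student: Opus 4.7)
The plan is to derive a one-step recursion relating consecutive $\theta_k$'s and exploit it to prove parts (1) and (2) jointly by induction. Since $\x^{(k)}$ attains the minimum in the subproblem \eqref{Key subprob of Dink}, one has $F(\theta_k)=f(\x^{(k)})-\theta_k g(\x^{(k)})$, and dividing through by $g(\x^{(k)})$ (positive on $S$) yields the key identity
\[
\theta_{k+1}-\theta_k=\frac{F(\theta_k)}{g(\x^{(k)})}.
\]
Starting from $\theta_1=f(\x^{(0)})/g(\x^{(0)})\geq\bar{\theta}$ (because $\x^{(0)}\in S$ and $\bar{\theta}$ minimizes $f/g$ over $S$), I assume inductively that $\theta_k\geq\bar{\theta}$. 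Testing the subproblem with any optimizer $\bar{\x}$ of \eqref{e1} (for which $f(\bar{\x})=\bar{\theta}g(\bar{\x})$) gives $F(\theta_k)\leq f(\bar{\x})-\theta_k g(\bar{\x})=-g(\bar{\x})(\theta_k-\bar{\theta})\leq 0$, so the identity forces $\theta_{k+1}\leq\theta_k$, while $\theta_{k+1}=f(\x^{(k)})/g(\x^{(k)})\geq\bar{\theta}$ because $\x^{(k)}\in S$. This closes the induction and proves part (1). For part (2), the map $\theta\mapsto F(\theta)$ is nonincreasing because $g\geq 0$ makes $f(\x)-\theta g(\x)$ pointwise nonincreasing in $\theta$; combined with $\theta_k\geq\theta_{k+1}$ from part (1), this gives $F(\theta_k)\leq F(\theta_{k+1})$, and $F(\theta_{k+1})\leq 0$ is already in hand from the inductive argument above applied at step $k+1$.

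For the linear rate in part (3), I plug the bound $F(\theta_k)\leq -g(\bar{\x})(\theta_k-\bar{\theta})$ back into the key identity to obtain
\[
\theta_{k+1}-\bar{\theta}\;\leq\;\left(1-\frac{g(\bar{\x})}{g(\x^{(k)})}\right)(\theta_k-\bar{\theta}).
\]
Compactness of $S$ together with continuity and strict positivity of $g$ on $S$ yield $g_{\max}:=\max_{\x\in S}g(\x)<\infty$, so the coefficient is bounded above by $c:=1-g(\bar{\x})/g_{\max}\in[0,1)$. Iterating gives $\theta_k-\bar{\theta}\leq c^{k-1}(\theta_1-\bar{\theta})$, which is the asserted linear convergence. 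For the final clause of part (3), compactness of $S$ ensures that $\{\x^{(k)}\}$ admits convergent subsequences; if $\x^{(k_j)}\to\x^{*}$, continuity of $f$ and $g$ gives $\theta_{k_j+1}=f(\x^{(k_j)})/g(\x^{(k_j)})\to f(\x^{*})/g(\x^{*})$, and the already established convergence $\theta_{k_j+1}\to\bar{\theta}$ forces $f(\x^{*})/g(\x^{*})=\bar{\theta}$, so that $\x^{*}$ is an optimal solution of \eqref{e1} by Proposition \ref{prop1}.

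The main subtlety is the contraction constant: the bound $c<1$ hinges on strict positivity of $g$ on $S$, which is slightly stronger than the $g\geq 0$ assumption of Proposition \ref{prop1} but is natural in the generalized tensor eigenvalue setting where $\B$ is positive definite. If $g$ is merely nonnegative and vanishes somewhere on $S$, both the iterates $\theta_{k+1}=f(\x^{(k)})/g(\x^{(k)})$ may be ill-defined and the contraction factor may degenerate to $1$, causing the rate to drop from linear to possibly sublinear. Thus the most delicate step in the argument is to secure, uniformly along the trajectory, the geometric bound $c<1$; all other steps are routine manipulations of the identity above, continuity, and compactness.
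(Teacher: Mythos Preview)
The paper does not actually supply a proof of Proposition~\ref{prop2}: the result is stated in Appendix~\ref{appendix convergence of algo 1} with a citation to Crouzeix--Ferland--Schaible \cite{CFS1985} and no argument of its own. So there is nothing to compare against at the level of proof details; your proposal is effectively filling in what the paper outsources to the literature.

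Your argument is correct and is in fact the standard one. The recursion $\theta_{k+1}-\theta_k=F(\theta_k)/g(\x^{(k)})$, the test point $\bar{\x}$ yielding $F(\theta_k)\le -g(\bar{\x})(\theta_k-\bar{\theta})$, and the resulting contraction with factor $1-g(\bar{\x})/g_{\max}$ are exactly the ingredients in the Dinkelbach convergence analysis. Your caveat about strict positivity of $g$ on $S$ is the right one to flag: the paper does assume it throughout (see the hypothesis $g(\x)>0,\ \x\in S$ in Theorem~\ref{theorem1} and the standing positive-definiteness assumption on $\B$), so the contraction constant $c<1$ is secured in the setting where the proposition is used. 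One small sharpening you could make: since the bound $F(\theta_k)\le -g(\bar{\x})(\theta_k-\bar{\theta})$ holds for \emph{every} optimizer $\bar{\x}$ of \eqref{e1}, you may replace $g(\bar{\x})$ by $\max\{g(\bar{\x}):\bar{\x}\text{ optimal}\}$ and thereby tighten $c$; this does not change the logic but gives the best constant the argument can deliver.
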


\section{Kurdyka-{\L}ojasiewicz (KL) Property}
\label{appendix KL prop}


\begin{definition}[Proper, Lower Semicontinuous Function]
A function $\varphi: \mathbb{R}^n \to \mathbb{R} \cup \{+\infty\}$ is called a proper, lower semicontinuous function if it satisfies the following conditions:
\begin{enumerate}
    \item \textbf{Properness:} $\varphi(\mathbf{x}) < +\infty$ for at least one $\mathbf{x} \in \mathbb{R}^n$, and $\varphi(\mathbf{x}) > -\infty$ for all $\mathbf{x} \in \mathbb{R}^n$.
    \item \textbf{Lower Semicontinuity:} For any sequence $\{\mathbf{x}_k\} \subset \mathbb{R}^n$ that converges to $\mathbf{x} \in \mathbb{R}^n$, the following holds:
    \[
    \varphi(\mathbf{x}) \leq \liminf_{k \to \infty} \varphi(\mathbf{x}_k).
    \]
\end{enumerate}
\end{definition}

\begin{definition}[KL Property]
Let $\varphi: \mathbb{R}^n \to \mathbb{R} \cup \{+\infty\}$ be a proper, lower semicontinuous function. The function $\varphi$ is said to satisfy the Kurdyka-{\L}ojasiewicz (KL) property at a point $\bar{\mathbf{x}} \in \text{dom}(\partial \varphi)$ if there exist $\eta \in (0, +\infty)$, a neighborhood $U$ of $\bar{\mathbf{x}}$, and a continuous concave function $\psi: [0, \eta) \to [0, +\infty)$ such that:
\begin{enumerate}
    \item $\psi(0) = 0$,
    \item $\psi$ is differentiable on $(0, \eta)$ with $\psi'(s) > 0$ for all $s \in (0, \eta)$,
    \item For all $\mathbf{x} \in U$ such that $\varphi(\mathbf{x}) > \varphi(\bar{\mathbf{x}})$ and $\varphi(\mathbf{x}) - \varphi(\bar{\mathbf{x}}) < \eta$, the following inequality holds:
    \[
    \psi'(\varphi(\mathbf{x}) - \varphi(\bar{\mathbf{x}})) \cdot \|\partial \varphi(\mathbf{x})\| \geq 1,
    \]
    where $\partial \varphi(\mathbf{x})$ denotes the limiting subdifferential of $\varphi$ at $\mathbf{x}$.
\end{enumerate}
\end{definition}

\section{Coefficientions for Examples}
\label{appendix coeff}

\subsection{Coefficients for Example \ref{example2}}
$$
\begin{array}{llll}
a_{1111}=~~0.2883, &~a_{1112}=-0.0031, &~a_{1113}=~~0.1973, &~a_{1122}=-0.2485,\\
a_{1123}=-0.2939, &~a_{1133}=~~0.3847, &~a_{1222}=~~0.2972, &~a_{1223}=~~0.1862,\\
a_{1233}=~~0.0919, &~a_{1333}=-0.3619, &~a_{2222}=~~0.1241, &~a_{2223}=-0.3420,\\
a_{2233}=~~0.2127, &~a_{2333}=~~0.2727, &~a_{3333}=-0.3054. &
\end{array}
$$

\subsection{Coefficients for Example \ref{example3}}

$$
\begin{array}{llll}
a_{111111}=~~0.2888,&~a_{111112}=-0.0013,  &~a_{111113}=-0.1422, &~a_{111114}=-0.0323,\\
a_{111122}=-0.1079, &~a_{111123}=-0.0899,  &~a_{111124}=-0.2487, &~a_{111133}=~~0.0231,\\
a_{111134}=-0.0106, &~a_{111144}=~~0.0740, &~a_{111222}=~~0.1490,&~a_{111223}=~~0.0527,\\
a_{111224}=-0.0710, &~a_{111233}=-0.1039,  &~a_{111234}=-0.0250, &~a_{111244}=~~0.0169,\\
a_{111333}=~~0.2208,&~a_{111334}=~~0.0662, &~a_{111344}=~~0.0046,&~a_{111444}=~~0.0943,\\
a_{112222}=-0.1144, &~a_{112223}=-0.1295,  &~a_{112224}=-0.0484, &~a_{112233}=~~0.0238,\\
a_{112234}=-0.0237, &~a_{112244}=~~0.0308, &~a_{112333}=~~0.0142,&~a_{112334}=~~0.0006,\\
a_{112344}=-0.0044, &~a_{112444}=~~0.0353, &~a_{113333}=~~0.0947,&~a_{113334}=-0.0610,\\
a_{113344}=-0.0293, &~a_{113444}=~~0.0638, &~a_{114444}=~~0.2326,&~a_{122222}=-0.2574,\\
a_{122223}=~~0.1018,&~a_{122224}=~~0.0044, &~a_{122233}=~~0.0248,&~a_{122234}=~~0.0562,\\
a_{122244}=~~0.0221,&~a_{122333}=~~0.0612, &~a_{122334}=~~0.0184,&~a_{122344}=~~0.0226,\\
a_{122444}=~~0.0247,&~a_{123333}=~~0.0847, &~a_{123334}=-0.0209, &~a_{123344}=-0.0795,\\
a_{123444}=-0.0323, &~a_{124444}=-0.0819,  &~a_{133333}=~~0.5486,&~a_{133334}=-0.0311,\\
a_{133344}=-0.0592, &~a_{133444}=~~0.0386, &~a_{134444}=-0.0138, &~a_{144444}=~~0.0246,\\
a_{222222}=~~0.9207,&~a_{222223}=-0.0908,  &~a_{222224}=~~0.0633,&~a_{222233}=~~0.1116,\\
a_{222234}=-0.0318, &~a_{222244}=~~0.1629, &~a_{222333}=~~0.1797,&~a_{222334}=-0.0348,\\
a_{222344}=-0.0058, &~a_{222444}=~~0.1359, &~a_{223333}=~~0.0584,&~a_{223334}=-0.0299,\\
a_{223344}=-0.0110, &~a_{223444}=~~0.1375, &~a_{224444}=-0.1405, &~a_{233333}=~~0.3613,\\
a_{233334}=~~0.0809,&~a_{233344}=~~0.0205, &~a_{233444}=~~0.0196,&~a_{234444}=~~0.0226,\\
a_{244444}=-0.2487, &~a_{333333}=~~0.6007, &~a_{333334}=-0.0272, &~a_{333344}=-0.1343,\\
a_{333444}=-0.0233, &~a_{334444}=-0.0227,  &~a_{344444}=-0.3355, &~a_{444444}=-0.5937.
\end{array}
$$

\subsection{Coefficients for Example \ref{example4}}

$$
\begin{array}{llll}
a_{1111}=~~0.4982, &~a_{1112}=-0.0582, &~a_{1113}=-1.1719, &~a_{1122}=~~0.2236,\\
a_{1123}=-0.0171,  &~a_{1133}=~~0.4597,&~a_{1222}=~~0.4880,&~a_{1223}=~~0.1852,\\
a_{1233}=-0.4087,  &~a_{1333}=~~0.7639,&~a_{2222}=~~0.0000,&~a_{2223}=-0.6162,\\
a_{2233}=~~0.1519, &~a_{2333}=~~0.7631,&~a_{3333}=~~2.6311,
\end{array}
$$
and
$$
\begin{array}{llll}
b_{1111}=~3.0800, &~b_{1112}=~0.0614, &~b_{1113}=~0.2317, &~b_{1122}=~0.8140,\\
b_{1123}=~0.0130, &~b_{1133}=~2.3551, &~b_{1222}=~0.0486, &~b_{1223}=~0.0616,\\
b_{1233}=~0.0482, &~b_{1333}=~0.5288, &~b_{2222}=~1.9321, &~b_{2223}=~0.0236,\\
b_{2233}=~1.8563, &~b_{2333}=~0.0681, &~b_{3333}=~16.0480.&
\end{array}
$$

\section{Definition for Tensor Eigenvalues}
\label{Appendix: Def of Tensor eig}

\begin{table}[ht]
\centering
\caption{Comparison of D-, B-, H-, and Z-Eigenpairs.}
\begin{tabular}{@{}p{4cm}p{7.5cm}@{}}
\toprule
\textbf{Aspect}              & \textbf{Details} \\ \midrule
\textbf{D-Eigenpair}         & \(\mathcal{A}\x^{m-1} = \lambda \mathcal{D} \x\), $\|\x\| = 1$. Used for diagonalized structures. \\
\textbf{B-Eigenpair}         & \(\mathcal{A}\x^{m-1} = \lambda \mathcal{B}\x^{m-1}\) , $\|\B\x\| = 1$. Generalized eigenvalues for two tensors. \\
\textbf{H-Eigenpair}         & \(\mathcal{A}\x^{m-1} = \lambda \x, \|\x\| = 1, \x \geq 0\). Constrained to unit sphere and nonnegative. \\
\textbf{Z-Eigenpair}         & \(\mathcal{A}\x^{m-1} = \lambda \x, \|\x\| = 1\). Simplest symmetric form. \\ \midrule

\textbf{Constraint on \(\x\)} &
Diagonal structure for D-Eigenpair, B-dependent constraints for B-Eigenpair, nonnegativity for H-Eigenpair, and unit norm for Z-Eigenpair. \\

\textbf{Tensors Involved}    &
D-Eigenpair: \(\mathcal{A}\) and  \(\mathcal{D}\) (diagonal); B-Eigenpair: \(\mathcal{A}, \mathcal{B}\); H- and Z-Eigenpair: \(\mathcal{A}\) only. \\

\textbf{Special Properties}  &
Diagonalization (D), general optimization (B), nonnegative decomposition (H), and principal component analysis (Z). \\

\textbf{Applications}        &
D: Data decomposition, diffusion kurtosis imaging (DKI); B: Optimization tasks; H: Nonnegative matrix/tensor factorization; Z: Symmetric tensor decompositions. \\ \bottomrule
\end{tabular}
\label{tab:vertical_eigenpair_comparison}
\end{table}




\end{document}